\documentclass[11pt]{amsart}

\oddsidemargin 0.0in 
\textwidth 6.5in 
\addtolength{\topmargin}{-5.0in} 
\addtolength{\textheight}{1.75in} 
\usepackage[margin=1.2in]{geometry}

\usepackage{amssymb}
\usepackage{amscd}
\usepackage{verbatim}
\usepackage[latin1]{inputenc}
\usepackage{float}
\usepackage{bm}
\usepackage[T1]{fontenc}
\usepackage{enumitem}
\usepackage{xspace,xcolor}
\usepackage{tikz-cd}
\usepackage{caption}
\usepackage[labelformat=simple]{subcaption}

\captionsetup{labelfont=rm}
\usepackage[breaklinks,colorlinks,linkcolor=red,citecolor=blue]{hyperref}
\usepackage{cleveref}
\usepackage[alphabetic]{amsrefs}

\hypersetup{linktocpage} 

\DeclareMathAlphabet{\mathscr}{U}{BOONDOX-cal}{m}{n}
\SetMathAlphabet{\mathscr}{bold}{U}{BOONDOX-cal}{b}{n}
\DeclareMathAlphabet{\mathbscr}{U}{BOONDOX-cal}{b}{n}

\newcommand{\f}[1]{\mathbb{#1}}
\newcommand{\ns}[1]{\mathscr{#1}}
\newcommand{\ta}[1]{\widetilde{#1}}
\newcommand{\nc}[1]{\mathcal{#1}}
\newcommand{\p}[1]{\text{Spec}{#1}}

\newtheorem{thm}{Theorem}
\newtheorem{cor}[thm]{Corollary} 
\newtheorem{prop}[thm]{Proposition}
\newtheorem{lemma}[thm]{Lemma}

\theoremstyle{definition} 
\newtheorem{defn}[thm]{Definition}
\theoremstyle{remark} 
\newtheorem{ex}[thm]{Example}
\newtheorem{remark}[thm]{Remark}

\Crefname{thm}{Theorem}{Theorems}
\Crefname{cor}{Corollary}{Corollaries}
\Crefname{prop}{Proposition}{Propositions}
\Crefname{lemma}{Lemma}{Lemmas}
\Crefname{defn}{Definition}{Definitions}
\Crefname{remark}{Remark}{Remarks}
\Crefname{ex}{Example}{Examples}

\begin{document}

\title{Geometry over the tropical dual numbers}
\author[Keyvan Yaghmayi]{Keyvan Yaghmayi}
\address{Department of Mathematics, University of Utah, 155 S 1400 E, Salt Lake City, Utah 84112, USA}
\email{yaghmayi@math.utah.edu}
\thanks{Author's research is supported in part by NSF FRG grant DMS-1265285}
\keywords{tropical geometry, congruence variety, tropical dual numbers, Berkovich spaces}
\subjclass[2010]{Primary 14T05; Secondary 16Y30}
\date{\today}

\begin{abstract}
We introduce \textit{tropical dual numbers} $\ta{\f{T}}$ as an extension of tropical semiring $\f{T}$.  By this innovation, one can work with honest ideals, instead of congruences, and recover the Euclidean topology on affine tropical space $\f{T}^k$ similar to Zariski's approach in classical algebraic geometry.  The bend loci of an ideal over $\ta{\f{T}}$ coincides with the bend loci of a congruence over $\f{T}$ and this enables $\ta{\f{T}}$ to serve as an algebraic structure for tropical geometry.  Tropical Zariski topology on $\f{T}^k$ whose closed sets are non-linear loci of ideals $\ns{I}$ in $\ta{\f{T}}[x]$ offers an alternative point of view to strong Zariski topology defined by Giansiracusa.
\end{abstract}

\maketitle
\tableofcontents

\section{Introduction}
Tropical geometry, that might broadly described as algebraic geometry over the semifield $\f{T} := \left(\f{R} \cup \{\infty \}, \min, + \right)$ of tropical numbers, has been received a lot of attention in recent years.  One can define tropical varieties as non-linear locus of ideals and equip $\f{T}^k$ with the topology whose closed sets are tropical varieties. However, difficulties arise immediately: the subspace topology on tropical variety $V \subseteq \f{T}^k$ is not an intrinsic invariant of $V$ \textit{i.e.} depends on the embedding of $V$ into affine space $\f{T}^k$.  See \Cref{embedding}. \vspace{0.1cm}

Researchers addressed this issue in several different approaches.  In \cite{MS} and \cite{Mikh} they simply equip the affine space, \textit{e.g.} $\f{T}^k$ in our setting, with the Euclidean topology.  The authors in \cite{BE} and \cite{JM} work with congruences instead of ideals.  The idea of working with congruences in a semiring is that the pair $(f, g)$ represents the subtraction $f-g$.  Since in a ring one has $(f-g)(f'-g') = (ff'+gg')-(fg'+f'g)$, in order to make the pair $(f, g)$ compatible with multiplication of the semiring, one needs to define the product of pairs as $(f, g) \times (f', g') := (ff'+gg', fg'+f'g)$.  It is straightforward to show that the topology on $\f{T}^k$ whose closed sets are congruence varieties is the Euclidean topology (in this paper it follows from \Cref{etztopo,Enhanced}).  In \cite{IR} and \cite{Izh1} Izhakian and Rowen extend $\f{T}$ to \textit{supertropical semiring} which is not an idempotent semiring.  In addition, their topology is more complicated than the Euclidean topology.  See \cite{Izh2} Section 1.  Jeffrey and Noah Giansiracusa \cite{GG2} define strong Zariski topology by using the $\f{T}$-points of arbitrary subschemes as the closed sets and they show it gives the Euclidean topology on $\f{T}^k$. \vspace{0.1cm}

The purpose of this paper is to introduce the semiring of \textit{tropical dual numbers} $\ta{\f{T}}$, see \Cref{trop-dual-num}, such that the bend loci of ideals of $\ta{\f{T}}[x_1, \dots, x_k]$ induce the Euclidean topology on $\f{T}^k$.  Loosely speaking, one might think about the semiring $\ta{\f{T}}$ as $\f{T}$ without the idempotency.  Then the bend loci of polynomial $x^2 + x + x +1$ is $[0, 1]$, namely, all points $a \in \f{T}$ where $\min \{2x, x, x, 1\}$ achieves its minimum in at least two monomial terms. \vspace{0.1cm}

The semiring of tropical dual numbers $\ta{\f{T}}$ is an extension of $\f{T}$ by adjoining a nonzero nilpotent element $\epsilon$ such that $\epsilon^2 = 1_{\f{T}}$.  In other words, a tropical dual number is given by an expression of the form $a + b \epsilon$ where $a, b \in \f{T}$ and $\epsilon^2 = 1_{\f{T}}$.  Then the polynomial $x^2 + x + x +1$ will be presented, with coefficients in $\ta{\f{T}}$, as $x^2 + x + \epsilon x +1$, and in this way, we are allowing polynomial to have more terms. \vspace{0.1cm}

Another way is to think of $\ta{\f{T}}$ as an extension of $\f{T}$ in the same way that one extends $\f{R}$ to $\f{C}$: we divide the polynomial semiring $\f{T}[\epsilon]$ by congruence $\langle \epsilon^2 \sim 1_{\f{T}} \rangle$ and $\epsilon \in \ta{\f{T}}$ will play the role of the missing \textit{negative} $1_{\f{T}}$.  This point of view and its relation with congruences are explained with more details in \Cref{Semiring of Tropical Dual Numbers}. \vspace{0.1cm}

If $\ns{I}$ be an ideal in $\ta{\f{T}}[x]$, then, in \Cref{etztopo}, we shall show the topology on $\f{T}^k$ whose closed sets are $V(\ns{I})$ is the Euclidean topology, hence it agrees with the \textit{strong Zariski topology} on $\f{T}^k$, defined in \cite{GG2} Section 3.4, which is the one used to show that the universal tropicalization of an affine variety is homeomorphic to the inverse limit of all tropicalizations (\cite{GG2} Theorem 4.1.1) and hence to the Berkovich analytification of the variety \cite{SP}. \vspace{0.1cm}

In \Cref{Tropical Varieties and Congruence Varieties} we make connections among classical tropical varieties, congruence varieties, and tropical varieties over $\ta{\f{T}}$.  We show the horizontal embedding of congruence variety $V \subseteq \f{T}^k$ into $\f{T}^{k+1}$ is a tropical variety.  Similar assertion for a variety over $\ta{\f{T}}$ is proved in \Cref{Enhancedclassical}. \vspace{0.3cm}

\textbf{Acknowledgments}. My sincere thanks go to my advisor, Professor Tommaso de Fernex, for his guidance, encouragement and support during the development of this paper.  I would like to thank Aaron Bertram for his useful comments and discussions.  I want to express my sincere gratitude to Jeffrey and Noah Giansiracusa for their many helpful comments.  I am also grateful for conversation and suggestions from Diane Maclagan.  

\section{Semiring of Tropical Dual Numbers} \label{Semiring of Tropical Dual Numbers}

Let $\f{T}:= \left(\f{R} \cup \{\infty \}, \min, + \right)$ be the \textit{tropical semifield}, that is the set $\f{R} \cup \{\infty \}$ equipped with the following two operations: the minimum called the tropical addition with neutral element $0_{\f{T}}=\infty$ and the addition called the tropical multiplication with neutral element $1_{\f{T}}=0$.  Let $\f{T}[x]=\f{T}[x_1, x_2, ..., x_k]$ be the tropical polynomial semiring in $k$ variables.  We write $x^n$ for monomial $x_1^{n_1}x_2^{n_2}...x_k^{n_k}$ in $\f{T}[x]$ and, in order to simplify the notation, we assume $x_i^0=1_{\f{T}}=0$ which allows us to write $x^n=x_1^{n_1}x_2^{n_2}...x_k^{n_k}$ for $x_{i_1}^{n_{i_1}}x_{i_2}^{n_{i_2}}...x_{i_r}^{n_{i_r}}$ where some of $n_i$'s are zero.  A tropical polynomial is a finite sum 
\begin{align}
f(x)=\sum_n {c_nx^n} \label{tropoly}
\end{align}
where $c_n \in \f{R}$ and $n=(n_1, n_2, ..., n_k) \in \f{Z}_{\ge 0}^k$.  Since distinct tropical polynomials could define the same polynomial function we write $f(x) = \min_n  \{ c_n + \sum_i n_ix_i \}$ when we consider $f$ as a function.  Its value at $a = (a_1, a_2, ..., a_k) \in \f{T}^k$ is 
\begin{align}
f(a) = \min_n  \{ c_n + \sum_i n_ia_i \} \label{tropolyvalue}
\end{align} 

When there is no ambiguity in the context, we might use "$+$" and "$\sum$" in the sense of tropical or classical summation.  For instance, the $\sum$ in \ref{tropoly} is the tropical summation but the $\sum$ in \ref{tropolyvalue} is the classical addition.
  
\begin{defn} \label{ideal}  
An ideal $I$ of $\f{T}[x]$ is a submonoid of $\left(\f{T}[x], \textit{min}\right)$ which is closed under multiplication by elements of $\f{T}[x]$.  The \textit{tropical variety} associated to ideal $I$ is $V(I)=\bigcap_{f \in I}  V(f)$ where $V(f)$ is the nonlinear locus of $f: \f{T}^k \to \f{T}$.
\end{defn}

\begin{defn}\label{con}
A congruence $E$ of $\f{T}[x]$ is a subset of $\f{T}[x] \times \f{T}[x]$ that satisfies the following properties:
\begin{enumerate}[label=$(\alph*)$]
\item For every $f \in \f{T}[x]$ one has $(f, f) \in E$
\item $(f, g) \in E$ if and only if $(g, f) \in E$
\item If $(f, g) \in E$ and $(g, h) \in E$ then $(f, h) \in E$
\item If $(f, g) \in E$ and $(f', g') \in E$ then $(f+f', \,g+g') \in E$
\item If $(f, g) \in E$ and $(f', g') \in E$ then $(ff', \,gg') \in E$
\end{enumerate}
\end{defn}

In fact, a congruence of $\f{T}[x]$ is an equivalence relation on $\f{T}[x]$ that respects the semiring structure of $\f{T}[x]$.  In the category of rings there is a one-to-one correspondence between ideals and congruences: given an ideal $I$ of a ring $R$ let $E_I=\{ (a, b) \in R \times R \colon a-b \in I\}$.  Then $E_I$ is a congruence of $R$. Conversely, for congruence $E$ of $R$ let $I_E=\{a \colon (a, 0) \in E \}$.  One can easily show that $I_E$ is an ideal of $R$. For semirings there is not such a bijective correspondence anymore simply because there is no subtraction.  Here we give an explicit example

\begin{ex} \label{F1field}
It is straightforward to check that the tropical semiring $\f{T}$ has no nontrivial proper ideal.  On the other hand, $\f{T}$ has three congruences 
\begin{enumerate}[label=-]
\item The trivial congruence $\Delta = \{ (a, a) \colon a \in \f{T} \}$
\item The improper congruence $\f{T} \times \f{T}$
\item The nontrivial proper congruence $E_{\infty} = \f{T} \times \f{T} \setminus \{ (a, \infty) , (\infty, a) \colon a \neq \infty \}$. 
\end{enumerate}
To see this, let $E$ be a nontrivial proper congruence of $\f{T}$.  Since $E$ is nontrivial, there is $a < b$ such that $(a, b) \in E$.  We claim $b \neq \infty$ because otherwise $(a, \infty) \in E$ and then for any $c \in \f{T}$ we have $(c, c) + (a, \infty) = (a+c, \infty) \in E$.  This implies $(d, \infty) \in E$ for any $d$ which contradicts properness.  Thus, $(a, b) \in E$ for some $a < b < \infty$.  Therefore, $(-a, -a) + (a, b) = (0, b-a) \in E$.  Given a positive real number $c \in \f{R}$, pick a large positive integer $m$ such that $c < m(b-a)$, then $(\min\{0, c\}, \min\{c, m(b-a)\})=(0, c) \in E$.  Finally $(0, c)+(-c, -c)=(-c, 0) \in E$.  We conclude $(0, c) \in E$ for any real number $c$ and $E = E_{\infty}$. 
\end{ex}

If $E$ is a congruence of $\f{T}[x]$, then the \textit{quotient semiring} $\f{T}[x] / E$ is the set of all equivalence classes with natural addition and multiplication, \textit{i.e.}, $[f]+[g] = [f+g]$ and $[f].[g] = [fg]$ where $[f]$ is the equivalence class of $f \in \f{T}[x]$.  In particular, in \Cref{F1field}, the quotient of $\f{T}$ by $E_{\infty}$ has two elements: $[0_{\f{T}}]$ and $[1_{\f{T}}]$ with usual addition and multiplication except that $[1_{\f{T}}] + [1_{\f{T}}] = [1_{\f{T}}]$. \vspace{0.2cm}

The \textit{twisted product} of two pairs $(f, g)$ and $(f', g')$ considered in \cite{BE} is
\begin{align*}
(f, g) \times (f', g') &= \left( ff'+gg', \, fg'+gf' \right) 
\end{align*} 
One can show without difficulty that in \Cref{con} property $(e)$ could be replaced with the following condition:
\begin{enumerate}[label=$(e')$]
\item If $(f, g) \in E$ and $(f', g') \in \f{T}[x] \times \f{T}[x]$ then $(f, g) \times (f', g') \in E$
\end{enumerate} \vspace{0.1cm}

If $E$ be a congruence and $(f, g) \in E$, then it is convenient to write $f \sim g$.  We write $E = \langle f_\alpha \sim g_\alpha \rangle_{\alpha \in \Lambda}$ for the smallest congruence that contains relations $\{ f_\alpha \sim g_\alpha \}_{\alpha \in \Lambda}$ and we call it the congruence generated by (equivalence) relations $f_\alpha \sim g_\alpha$ where $\alpha \in \Lambda$.
  
\begin{defn}
The \textit{congruence variety} associated to $E$ is
\begin{align*}
V(E) = \left\{ a \in \f{T}^k : f(a) = g(a) \ \, \text{for every} \ f \sim g \ \text{in} \ E \right\}
\end{align*}
\end{defn}

\begin{prop}[\cite{BE}, Proposition 3.1]
The congruence variety of $E=\langle f_1 \sim g_1, \ldots, f_r \sim g_r \rangle$ is
\begin{align*}
V(E)=\bigcap_{i=1}^r  \left\{ a \in \f{T}^k : f_i(a) = g_i(a) \right\}
\end{align*}
\end{prop}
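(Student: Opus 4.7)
The plan is to prove the proposition by a standard ``kernel of evaluation is a congruence'' argument. One inclusion is immediate: since each generator $(f_i, g_i)$ lies in $E$, any point $a \in V(E)$ satisfies $f_i(a) = g_i(a)$ for every $i$, giving $V(E) \subseteq \bigcap_{i=1}^r \{a \in \f{T}^k : f_i(a) = g_i(a)\}$.

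For the reverse inclusion, fix a point $a \in \f{T}^k$ and define
\[
E_a := \{ (f, g) \in \f{T}[x] \times \f{T}[x] : f(a) = g(a) \}.
\]
I would verify directly that $E_a$ satisfies the five axioms $(a)$--$(e)$ of \Cref{con}. Properties $(a)$--$(c)$ (reflexivity, symmetry, transitivity) are inherited from equality in $\f{T}$. For $(d)$ and $(e)$, the point is that evaluation at $a$ is a semiring homomorphism $\mathrm{ev}_a : \f{T}[x] \to \f{T}$, so $f(a) = g(a)$ and $f'(a) = g'(a)$ imply $(f+f')(a) = f(a)+f'(a) = g(a)+g'(a) = (g+g')(a)$ and similarly $(ff')(a) = (gg')(a)$. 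Hence $E_a$ is a congruence on $\f{T}[x]$.

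Now suppose $a$ lies in the right-hand side, so $f_i(a) = g_i(a)$ for $i = 1, \ldots, r$. Then each generator $(f_i, g_i)$ belongs to $E_a$. Because $E = \langle f_1 \sim g_1, \ldots, f_r \sim g_r \rangle$ is by definition the smallest congruence containing these generators, we conclude $E \subseteq E_a$. Therefore $f(a) = g(a)$ for every $(f, g) \in E$, which means $a \in V(E)$. This finishes the reverse inclusion and hence the proof.

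No step is a serious obstacle — the only thing to be careful about is that evaluation respects both tropical addition and tropical multiplication on polynomials, but this is built into the definition of the polynomial semiring and its function representation in \eqref{tropolyvalue}. The argument is really the semiring analogue of the familiar fact that an ideal generated by a set of relations cuts out the common zero locus of those relations, with ``kernel'' replaced by ``kernel congruence of evaluation.''
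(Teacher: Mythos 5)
Your argument is correct and complete: the only substantive point is that evaluation at a fixed $a \in \f{T}^k$ is a semiring homomorphism $\f{T}[x] \to \f{T}$ (both for tropical addition, where the minimum over the combined monomials equals the minimum of the two values, and for tropical multiplication), so $E_a$ is exactly the kernel congruence of $\mathrm{ev}_a$ and contains $E$ whenever it contains the generators, which you verify cleanly against the axioms of \Cref{con}. The paper itself states this proposition only by citation to Bertram--Easton and gives no proof, so there is nothing in the text to diverge from; your kernel-of-evaluation argument is the standard one and would serve as a self-contained proof here.
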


Now we introduce the semiring of tropical dual numbers that provides an adaptable algebraic structure for tropical geometry.  The "thought process" is somehow similar to constructing the complex numbers from the real numbers \footnote{This is just a loose idea since after extending $\f{T}$ to $\ta{\f{T}}$ the only element with additive inverse in $\ta{\f{T}}$ is still $0_{\ta{\f{T}}}=\infty$.}:  Since $\f{T}$ is suffering from lack of subtraction we try to add "negative $1_{\f{T}}$", which we represent with a new symbol $\epsilon$,  to $\f{T}$ by taking the quotient of $\f{T}[\epsilon]$ by congruence $\langle \epsilon^2 \sim 1_{\f{T}} \rangle$.  Then the pair $(a, b)$, \textit{i.e.} the imaginary subtraction $a-b$, will be represented by $a+b\epsilon$ in $\ta{\f{T}}$, and moreover, in the quotient semiring, the product $(a+b\epsilon)(a'+b'\epsilon)=(aa'+bb')+(ab'+ba')\epsilon$ will be compatible with the twisted product of pairs.  In this way $\f{T}$ extends to a bigger semiring that makes us needless of congruences (See \Cref{polypro,Enhanced}).

\begin{defn} \label{trop-dual-num}
The semiring of \textit{tropical dual numbers} $\ta{\f{T}}$ is the quotient of $\f{T}[\epsilon]$ by the congruence $E = \langle \epsilon^2 \sim 1_{\f{T}} \rangle$.
\end{defn}

Elements of $\ta{\f{T}}$ can be written, in a unique way, as $a+b\epsilon$ where $a, b \in \f{T}$.  For instance, $0_{\ta{\f{T}}} = 0_{\f{T}} + 0_{\f{T}}\epsilon = \infty + \infty \epsilon = \infty$ is the additive identity of $\ta{\f{T}}$ and $1_{\ta{\f{T}}} = 1_{\f{T}} + 0_{\f{T}}\epsilon = 1_{\f{T}}$ is the multiplicative identity.  In the following, we denote elements of $\ta{\f{T}}$ by Greek letters $\alpha$, $\beta$, $\gamma$, $\dots$ and if we need to work with the components of $\alpha$ then we write $\alpha=a+b\epsilon$.  The product of $\alpha=a+b\epsilon$ and $\beta=c+d\epsilon$ is $(a+b\epsilon)(c+d\epsilon) = (ac+bd)+(ad+bc)\epsilon$.  Obviously $\ta{\f{T}}$ is idempotent since it is the quotient of an idempotent semiring. \vspace{0.1cm}

Note that $\f{T}$ is a sub-semiring of $\ta{\f{T}}$ consisting of all elements $a+b\epsilon$ where $b=\infty$.  It is easy to see that $\ta{\f{T}}$ is a domain but not a semifield.  In fact, for $\alpha=a+b\epsilon$ let $-\alpha:=-a+(-b)\epsilon$.  Then $\alpha=a+b\epsilon$ has multiplicative inverse if and only if either $a=\infty$ or $b=\infty$ but not both, and in this case the inverse of $\alpha$ is $-\alpha$. \vspace{0.1cm}

The polynomial $\ns{f}$ defines a function $\ns{f} : \ta{\f{T}}^k \to \ta{\f{T}}$ but it has no "$\min$, $+$" interpretation since, unfortunately, the semiring of tropical dual numbers is not totally ordered.  One can put an order on $\ta{\f{T}}$ to obtain a geometric interpretation for the graph of $\ns{f}$ and then define $\ns{V}(\ns{f}) \subseteq \ta{\f{T}}^k$ similar to \Cref{ideal}, however, in the following we shall define tropical varieties $V(\ns{f})$ as subsets of $\f{T}^k$. \vspace{0.1cm}

Let $\alpha=a+b\epsilon$ be an element of $\ta{\f{T}}$.  Thinking of $\alpha$ as a degree one polynomial in $\f{T}[\epsilon]$, it defines a function $\alpha : \f{T} \to \f{T}$ such as $\alpha(c) = \min\{a, b+c\}$.  Now consider
\begin{equation*}
\begin{split}
\pi \colon  \ta{\f{T}} & \longrightarrow \f{T} \\
\alpha & \longmapsto \alpha(1_{\f{T}}) = \min\{a, b\}
\end{split}
\label{nu}
\end{equation*} 

It is straightforward to check that $\pi$ is an idempotent semiring homomorphism and $\pi \rvert_{\f{T}}=\text{id}_{\f{T}}$.  By applying $\pi$ coordinatewise, we have a map from $\ta{\f{T}}^k$ onto $\f{T}^k$, moreover, by applying $\pi$ to the coefficients of $\ns{f}$ we obtain a semiring homomorphism $\ta{\f{T}}[x] \to \f{T}[x]$.  We denote these maps by the same $\pi$.  For $\ns{f}(x)=\sum_n \alpha_nx^n \in \ta{\f{T}}[x]$ one has $\pi(\ns{f})(x) = \sum_n \pi(\alpha_n)x^n$ and $\pi(\ns{f}(\alpha)) = \pi(\ns{f})(\pi(\alpha))$ where $\alpha \in \ta{\f{T}}^k$.  In particular, for $a \in \f{T}$ we have $\pi(\ns{f}(a)) = \ns{f}(a)|_{\epsilon = 1_{\f{T}}}$. \vspace{0.2cm}

Let $\ns{f} \in \ta{\f{T}}[x]$.  If one considers $\ta{\f{T}}[x]$ as a $\ta{\f{T}}$-algebra, then a monomial term of $\ns{f}$ is of the form $\alpha x^n = (a_n+b_n\epsilon)x^n$, however, $\ta{\f{T}}[x] = \f{T}[x, \epsilon]$ is also a $\f{T}$-algebra where $\epsilon$ appears with degree at most $1$.  

\begin{defn}\label{nhctropvar}  
A \textit{simple monomial terms} of $\ns{f} \in \ta{\f{T}}[x]$ is a monomial term in the $\f{T}$-algebra $\f{T}[x,\epsilon]$, that is, a monomial terms of the form either $cx^n$ or $cx^n\epsilon$ where $c \in \f{T}$.  The tropical variety $V(\ns{f})$ is the collection of all points $a \in \f{T}^k$ where $\ns{f}(a)|_{\epsilon = 1_{\f{T}}}$ achieves its value in at least two simple monomial terms.  For ideal $\ns{I}$ in $\ta{\f{T}}[x]$, the associated tropical variety is $V(\ns{I}) = \bigcap_{\ns{f} \in \ns{I}} V(\ns{f})$. 
\end{defn}

In other words, if we denote a simple monomial term with $c_nx^n\epsilon^{\delta_n}$ where $c_n \in \f{T}$ and $\delta_n \in \{0, 1\}$, then $V(\ns{f})$ consists of those points $a \in \f{T}^k$ for which there are at least two simple monomial terms $c_mx^m\epsilon^{\delta_m}$ and $c_nx^n\epsilon^{\delta_n}$ such that $c_ma^m=c_na^n \leq \pi(\ns{f}(a))$.  Here the indices $m$ and $n$ may or may not be equal, for instance, $V( 1 + x + \epsilon x) = (-\infty, 1]$ because for $a \leq 1$ we have
\begin{align*}
a = \epsilon a |_{\epsilon = 1_{\f{T}}} \leq \ns{f}(a) |_{\epsilon = 1_{\f{T}}}
\end{align*}

\begin{ex} \label{xmplen}
Let $\ns{f}(x)=(3+1\epsilon)x^2 + (1+1\epsilon)x + 2\epsilon$.  Then for $a \in \f{T}$, we have $\ns{f}(a)=(3a^2+1a)+(1a^2+1a+2)\epsilon$ and $\pi(\ns{f}(a))=\min \{2a+1, a+1, 2\}$.  Note that $V(\ns{f})=[0, 1]$ since simple monomial terms in $\ns{f}$ are $\{3x^2, 1x^2\epsilon, 1x, 1x\epsilon, 2x^0\epsilon \}$ and for $a \in [0, 1]$ one has $\pi(1a)=\pi(1a\epsilon)=1+a$. 
\end{ex}

\Cref{nhctropvar,ideal} are consistent in the following sense.  The proof is postponed to the end this section. 

\begin{prop} \label{coincide}
Let $I$ be an ideal of $\f{T}[x]$ and let $\ns{I} = \langle I \rangle$ be the ideal in $\ta{\f{T}}[x]$ which is generated by $I$.  Then $V(\ns{I}) = V(I)$.
\end{prop}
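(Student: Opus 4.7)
The plan is to establish $V(\ns{I})\subseteq V(I)$ and $V(I)\subseteq V(\ns{I})$ separately, both hinging on a normal-form description of elements of $\ns{I}$. Since $\ns{I}$ is the ideal of $\ta{\f{T}}[x]$ generated by $I\subseteq\f{T}[x]$, an arbitrary $\ns{f}\in\ns{I}$ is a tropical sum $\ns{f}=\sum_i\alpha_i f_i$ with coefficients $\alpha_i=p_i+q_i\epsilon\in\ta{\f{T}}[x]$ and generators $f_i\in I$. Distributing and using $\epsilon^2=1_{\f{T}}$, I would collect the $\f{T}[x]$-part and the $\epsilon$-part separately to obtain
\begin{equation*}
\ns{f}=g+h\epsilon, \qquad g=\sum_i p_if_i, \qquad h=\sum_i q_if_i,
\end{equation*}
with both $g,h\in I$, because $I$ is closed under tropical sums and under multiplication by elements of $\f{T}[x]$. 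Note also that $I\subseteq\ns{I}$, since any $f\in I$ equals $f+\infty\epsilon$.

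For the inclusion $V(\ns{I})\subseteq V(I)$, I would observe that for $f\in I$ the simple monomial terms (viewed in $\ta{\f{T}}[x]$) are exactly its ordinary tropical monomial terms, since no $\epsilon$ appears, and $\pi(f(a))=f(a)$. Hence $V(f)$ in the sense of \Cref{nhctropvar} agrees with $V(f)$ in the sense of \Cref{ideal}, and any $a\in V(\ns{I})$ lies in $V(f)$ for every $f\in I$. For the reverse inclusion, I would take $a\in V(I)$ and any $\ns{f}=g+h\epsilon\in\ns{I}$. Then $\pi(\ns{f}(a))=\min\{g(a),h(a)\}$; assuming without loss of generality $g(a)\le h(a)$ with $g\ne\infty$, the hypothesis $a\in V(g)$ supplies two monomial terms $c_mx^m$ and $c_nx^n$ of $g$ with $c_ma^m=c_na^n=g(a)=\pi(\ns{f}(a))$. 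These are simple monomial terms of $\ns{f}$ (of the form $cx^n\epsilon^{\delta_n}$ with $\delta_n=0$), witnessing $a\in V(\ns{f})$. The dual case $h\ne\infty$, $h(a)\le g(a)$ is symmetric, producing simple monomial terms $cx^n\epsilon$, and the degenerate case $g=h=\infty$ gives $\ns{f}=\infty$ trivially.

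The most delicate step is the normal-form decomposition together with the bookkeeping of simple monomial terms, particularly the observation that an element of $I$ cannot be a nonzero single monomial as soon as $V(I)\ne\emptyset$ (such a monomial has empty bend locus), so the required splitting into two witnesses always succeeds. Once the decomposition is in place, the remaining case analysis is direct.
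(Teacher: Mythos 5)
Your proof is correct, but it takes a genuinely different route from the paper. The paper never looks at individual elements of $\ns{I}$: it first shows $V(\ns{I})=\bigcap_i V(\ns{f}_i)$ for any generating set, by writing $\ns{I}=\sum_i\langle \ns{f}_i\rangle$ and invoking \Cref{newprop}\ref{newintersection} together with \Cref{cor1}, and then observes that a generating set $\{f_i\}$ of $I$ also generates $\ns{I}$ and that \Cref{nhctropvar} applied to an $\epsilon$-free polynomial is the classical bend locus, so $V(\ns{I})=\bigcap_i V(f_i)=V(I)$ (the last equality being the classical counterpart of the same reduction-to-generators fact). You instead argue element-wise via the normal form $\ns{f}=g+h\epsilon$ with $g,h\in I$, splitting the simple monomial terms of $\ns{f}$ into the terms of $g$ and the $\epsilon$-terms of $h$ and using $\pi(\ns{f}(a))=\min\{g(a),h(a)\}$ to transfer the two witnesses from whichever of $g,h$ attains the minimum. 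Your decomposition is valid ($I$ is closed under tropical sums and under multiplication by $\f{T}[x]$, and the $\f{T}$- and $\epsilon$-components of a sum $\sum_i(p_i+q_i\epsilon)f_i$ are exactly $\sum_i p_if_i$ and $\sum_i q_if_i$), and the case analysis, including the monomial and $g=h=\infty$ degeneracies, is complete under the same convention the paper tacitly uses for the zero polynomial. What each approach buys: the paper's proof recycles its structural results (\Cref{newprop}, \Cref{cor1}) and mirrors the classical reduction to generators, but silently relies on the analogous classical statement for $V(I)$; yours is more self-contained, bypasses \Cref{newprop} and \Cref{cor1} entirely, and makes the mechanism explicit, namely that every element of $\langle I\rangle$ has both of its components back in $I$, so extending coefficients to $\ta{\f{T}}$ cannot cut the bend locus down further.
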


The following proposition follows immediately from \Cref{nhctropvar}.

\begin{prop} \label{newinc}
Let $\ns{I}$ and $\ns{J}$ be ideals in $\ta{\f{T}}[x]$.  If $\ns{I} \subseteq \ns{J}$, then $V(\ns{I}) \supseteq V\left(\ns{J}\right)$.
\end{prop}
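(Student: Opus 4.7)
The plan is to unwind the definition of the tropical variety attached to an ideal and then invoke the elementary set-theoretic fact that intersecting over a larger family produces a smaller set. Concretely, the definition given in \Cref{nhctropvar} sets
\[
V(\ns{I}) \;=\; \bigcap_{\ns{f} \in \ns{I}} V(\ns{f}) \qquad \text{and} \qquad V(\ns{J}) \;=\; \bigcap_{\ns{f} \in \ns{J}} V(\ns{f}),
\]
so everything reduces to comparing these two intersections.

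First I would fix a point $a \in V(\ns{J})$. By definition, this means $a \in V(\ns{f})$ for every $\ns{f} \in \ns{J}$. Because $\ns{I} \subseteq \ns{J}$ by hypothesis, any $\ns{f} \in \ns{I}$ is in particular an element of $\ns{J}$, and hence $a \in V(\ns{f})$ for every such $\ns{f}$. This gives $a \in V(\ns{I})$, so $V(\ns{J}) \subseteq V(\ns{I})$, as claimed.

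There is no real obstacle here: the argument is purely formal once the definition $V(\ns{I}) = \bigcap_{\ns{f} \in \ns{I}} V(\ns{f})$ is in hand, exactly as in the classical case. The only thing worth remarking is that one does not need to invoke the characterization of $V(\ns{f})$ via simple monomial terms, nor any structural property of ideals in $\ta{\f{T}}[x]$; the containment-reverses-intersection principle is all that is used.
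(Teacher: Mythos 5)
Your argument is correct and matches the paper's treatment: the paper simply remarks that the statement follows immediately from \Cref{nhctropvar}, and your unwinding of $V(\ns{I}) = \bigcap_{\ns{f}\in\ns{I}} V(\ns{f})$ together with the fact that intersecting over a larger family yields a smaller set is exactly that immediate argument. Nothing further is needed.
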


\begin{remark} \label{rmrken} 
Fix a polynomial $f(x)=\sum_n {c_nx^{m_n}}$ in $\f{T}[x]$.  The graph of $z = f(x)$ is the lower envelope of the union of hyper-planes $\{z = c_n+m_nx\} \subset \f{T}^{k+1}$.  The projection of this graph onto $\f{T}^k$ induces a finite decomposition of $\f{T}^k$ into closed $k$-dimensional polyhedra.  Let's write $\f{T}^k = \cup_{j=1}^rP_j$ where the interior of the polyhedron $P_j$ is exactly those $a \in \f{T}^k$ where for some monomial term like $c_jx^{m_j}$ we have $c_j+m_ja < \min_{n \neq j} \{c_n+m_na \}$.  The boundary of these polyhedra, where at least two monomials attain the minimum in $\min_n \{c_n+m_na \}$, is $V(f)$ (for more details see \cite{MS} Section 3.1).  Now let $S$ be a subset of $\{1, 2, ..., r \}$ and consider polynomial $\ns{f}$ in $\ta{\f{T}}[x]$ obtained from $f$ by replacing $c_jx^{m_j}$ with $(c_j+c_j\epsilon)x^{m_j}$.  Then $V(\ns{f})$ will be the boundary of $P_j$'s plus the interior of those polyhedra $P_j$ where $j \in S$. In \Cref{enhancedvstrop} we illustrated this with an explicit example. 
\end{remark}

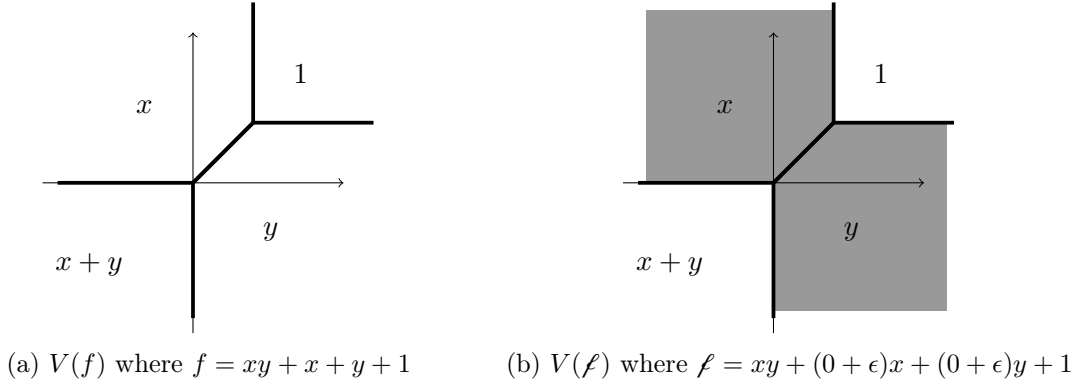
\begin{figure}[H]
\captionsetup{font=footnotesize}
\centering
\subcaptionbox{$V(f)$ where $f = xy + x + y + 1$}[.49\textwidth]{\begin{tikzpicture}
\draw [->] (-2,0) -- (2,0);
\draw [->] (0,-2) -- (0,2);
\draw (1.2,1.2) node [above right]{$1$};
\draw (-0.4,0.8) node [above left]{$x$};
\draw (0.8,-0.4) node [below right]{$y$};
\draw (-0.8,-0.8) node [below left]{$x+y$};
\draw [line width=0.5mm](-1.8,0) -- (0,0) -- (0.8,0.8) -- (0.8,2.4);
\draw [line width=0.5mm](0,-1.8) -- (0,0) -- (0.8,0.8) -- (2.4,0.8);
\end{tikzpicture}}
\subcaptionbox{$V(\ns{f})$ where $\ns{f} = xy + (0 +\epsilon)x + (0 +\epsilon)y + 1$}[.49\textwidth]{\begin{tikzpicture}
\fill[gray!80] (-1.7, 2.3) rectangle (0, 0);
\fill[gray!80] (0.8,0.8) rectangle (2.3,-1.7);
\fill[gray!80] (0,2.3) rectangle (0.8,-1.7);
\draw [->] (-2,0) -- (2,0);
\draw [->] (0,-2) -- (0,2);
\draw (1.2,1.2) node [above right]{$1$};
\draw (-0.4,0.8) node [above left]{$x$};
\draw (0.8,-0.4) node [below right]{$y$};
\draw (-0.8,-0.8) node [below left]{$x+y$};
\draw [line width=0.5mm](-1.8,0) -- (0,0) -- (0.8,0.8) -- (0.8,2.4);
\draw [line width=0.5mm](0,-1.8) -- (0,0) -- (0.8,0.8) -- (2.4,0.8);
\end{tikzpicture}}
\caption{A tropical variety over $\ta{\f{T}}$ looks like a classical tropical variety that some of the regions are filled in.}
\label{enhancedvstrop}
\end{figure}

Before proceeding, we would like to adopt the notations of \Cref{rmrken}.  We use script fonts to denote polynomial $\ns{f}$ and ideal $\ns{I}$ over the semiring of tropical dual numbers $\ta{\f{T}}$. The polynomial $f$ and ideal $I$ live in $\f{T}[x]$.  According to \Cref{nhctropvar,ideal} both (classical) tropical variety $V(I)$ and our generalized tropical variety $V(\ns{I})$ are subsets of $\f{T}^k$.  \vspace{0.08in}

The following properties of tropical varieties are well-known:

\begin{prop} \label{knownprop}
\begin{enumerate}[label=$(\alph*)$]
\item Let $I_1, \dots, I_r$ be ideals in $\f{T}[x]$.  We have $V(I_1) \cup \, \dots \, \cup V(I_r) = V(I_1\dots I_r)$. \label{knownunion} \item If $\{I_i\}_{i \in \Lambda}$ be a family of ideals in $\f{T}[x]$, then $\bigcap_{i \in \Lambda} V(I_i) = V \left(\sum_{i \in \Lambda} I_i \right)$.  \label{knownintersection}
\end{enumerate}
\end{prop}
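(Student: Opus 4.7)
My plan is to reduce both parts to two facts: (i) the single-polynomial identity $V(fg) = V(f) \cup V(g)$ for $f, g \in \f{T}[x]$, and (ii) the monotonicity principle that $I \subseteq J$ implies $V(J) \subseteq V(I)$, which is immediate from the definition $V(I) = \bigcap_{f \in I} V(f)$ and is the $\f{T}[x]$-version of \Cref{newinc}.

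To establish (i), I would use that $(fg)(a) = f(a) + g(a)$ (classical sum), which holds because the monomial of $fg$ at exponent $p$ has coefficient $\min_{n+m=p}(c_n + d_m)$ and the two minimizations in $n$ and $m$ decouple. A monomial of $fg$ at exponent $p$ achieves this minimum at $a$ iff some pair $(n,m)$ with $n+m=p$ simultaneously minimizes $f$ and $g$ at $a$. It follows that $f$ (resp.\ $g$) having two minimizing exponents at $a$ produces two distinct minimizing exponents for $fg$, while if both $f$ and $g$ have unique minimizers $n^\ast, m^\ast$, then any $(n,m) \neq (n^\ast, m^\ast)$ contributes strictly more, so $n^\ast + m^\ast$ is the unique minimizer in $fg$. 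Hence $V(fg) = V(f) \cup V(g)$.

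For (a), the closure of each $I_i$ under multiplication by $\f{T}[x]$ gives $I_1 \cdots I_r \subseteq I_i$ for every $i$, so monotonicity yields $V(I_1 \cdots I_r) \supseteq \bigcup_i V(I_i)$. For the reverse, if $a \notin V(I_i)$ for any $i$, pick $f_i \in I_i$ with $a \notin V(f_i)$; iterating (i) gives $a \notin V(f_1 \cdots f_r)$, and since $f_1 \cdots f_r \in I_1 \cdots I_r$ we conclude $a \notin V(I_1 \cdots I_r)$.

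For (b), the inclusion $V(\sum_i I_i) \subseteq \bigcap_i V(I_i)$ is again monotonicity. For the reverse, assume $a \in V(I_i)$ for every $i$ and let $f \in \sum_i I_i$. Using absorption by $\f{T}[x]$, I can write $f = \min(g_1, \ldots, g_k)$ with each $g_l$ in some $I_{j_l}$, so $a \in V(g_l)$ for every $l$. Picking $l^\ast$ with $f(a) = g_{l^\ast}(a)$, the polynomial $g_{l^\ast}$ has two monomials at distinct exponents $n_1 \neq n_2$ whose values at $a$ equal $f(a)$; since the coefficients of $f$ at $n_1$ and $n_2$ are tropical minima over all $g_l$'s and hence at most the corresponding coefficients in $g_{l^\ast}$, these same monomials still evaluate to $f(a)$ in $f$, so $a \in V(f)$. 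The main subtlety worth flagging is that in general $V(h_1) \not\subseteq V(h_1 + h_2)$: if $h_2(a) < h_1(a)$, then $h_1$'s minimizers become irrelevant for the sum. This obstruction is precisely what forces the argument to single out the index $l^\ast$ attaining the overall minimum, rather than working uniformly in $l$.
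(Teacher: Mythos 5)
Your proof is correct and takes essentially the same route as the paper, which proves the analogue over $\ta{\f{T}}$ (\Cref{newprop}) from the same two ingredients — the single-polynomial identity $V(fg)=V(f)\cup V(g)$ and the ``dominant summand'' argument for $V\left(\sum_i I_i\right)$ — and remarks that this argument yields a direct proof of \Cref{knownprop}. Your appeal to monotonicity ($I\subseteq J \Rightarrow V(J)\subseteq V(I)$) for the easy inclusions is only a minor streamlining of the same approach.
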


Here we prove these properties for ideals over the semiring of tropical dual numbers. Our proof can be used to give a direct proof of \Cref{knownprop}.  

\begin{prop} \label{newprop}
\begin{enumerate}[label=$(\alph*)$]
\item For ideals $\ns{I}_1, \dots, \ns{I}_r$ in $\ta{\f{T}}[x]$ one has $V(\ns{I}_1) \cup \, \dots \, \cup V(\ns{I}_r) = V(\ns{I}_1\dots\ns{I}_r)$.  \label{newunion} 
\item Let $\{ \ns{I}_i\}_{i \in \Lambda}$ be a family of ideals in $\ta{\f{T}}[x]$.  Then $\bigcap_{i \in \Lambda} V(\ns{I}_i) = V\left(\sum_{i \in \Lambda}\ns{I}_i\right)$. \label{newintersection}
\end{enumerate}
\end{prop}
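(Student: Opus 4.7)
The plan is to prove part (b) first, since it isolates the basic analysis of simple monomial terms under tropical sums, and then leverage the same bookkeeping for part (a). For (b), one inclusion $V(\sum_i \ns{I}_i) \subseteq \bigcap_i V(\ns{I}_i)$ is immediate from \Cref{newinc}, because $\ns{I}_i \subseteq \sum_j \ns{I}_j$ for every $i$. For the reverse inclusion, I would take $a \in \bigcap_i V(\ns{I}_i)$ and an arbitrary $\ns{f} \in \sum_j \ns{I}_j$, write $\ns{f} = \ns{f}_1 + \cdots + \ns{f}_m$ with $\ns{f}_k \in \ns{I}_{i_k}$, and use that $\pi$ is a semiring homomorphism to conclude $\pi(\ns{f}(a)) = \min_k \pi(\ns{f}_k(a))$. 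Picking $k_0$ achieving this minimum, the two-or-more simple monomial terms witnessing $a \in V(\ns{f}_{k_0})$ descend to simple terms of $\ns{f}$ at the same $x^n \epsilon^\delta$ positions (possibly combined via tropical min with terms from other $\ns{f}_k$); since no simple term of $\ns{f}$ can evaluate below $\pi(\ns{f})(a) = \pi(\ns{f}_{k_0})(a)$, these terms must evaluate exactly to the overall minimum, giving $a \in V(\ns{f})$.

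For part (a), the inclusion $\bigcup_j V(\ns{I}_j) \subseteq V(\ns{I}_1 \cdots \ns{I}_r)$ again follows from \Cref{newinc}: because each $\ns{I}_j$ is closed under multiplication by arbitrary elements of $\ta{\f{T}}[x]$, every generator $\ns{f}_1 \cdots \ns{f}_r$ of the product ideal lies in $\ns{I}_j$, hence $\ns{I}_1 \cdots \ns{I}_r \subseteq \ns{I}_j$ for each $j$. For the reverse inclusion I would argue by contrapositive: if $a \notin V(\ns{I}_j)$ for every $j$, choose $\ns{f}_j \in \ns{I}_j$ with $a \notin V(\ns{f}_j)$, so that $\pi(\ns{f}_j)(a)$ is attained by a unique simple monomial term $t_j$ of $\ns{f}_j$. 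The natural candidate demonstrating $a \notin V(\ns{I}_1 \cdots \ns{I}_r)$ is then the product $\ns{f}_1 \cdots \ns{f}_r$, which lies in the product ideal by definition.

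The main obstacle is verifying that $\ns{f}_1 \cdots \ns{f}_r$ still has a \emph{unique} simple monomial term attaining the minimum value at $a$. This requires a careful description of simple monomial terms in a product inside $\ta{\f{T}}[x] = \f{T}[x,\epsilon]/\langle \epsilon^2 \sim 1_{\f{T}}\rangle$: after imposing $\epsilon^2 = 1_{\f{T}}$, the coefficient at position $x^M \epsilon^\delta$ in the product is the tropical sum, over tuples $(s_1, \ldots, s_r)$ of simple terms $s_k$ of $\ns{f}_k$ whose classical product lands at $x^M \epsilon^\delta$, of the products $s_1 \cdots s_r$. Since $\pi$ is a semiring homomorphism one has $\pi((\ns{f}_1 \cdots \ns{f}_r)(a)) = \sum_j \pi(\ns{f}_j)(a)$, and a contributing tuple realizes this value at $a$ exactly when each $s_k$ equals $t_k$. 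This pins down a single position $x^{\sum n_j}\epsilon^{\sum \delta_j}$ where the minimum is attained, giving the needed uniqueness and completing the contrapositive. The same simple-term bookkeeping immediately yields, as a bonus, a direct proof of the classical statement \Cref{knownprop} by restricting to polynomials with $\epsilon$-free coefficients.
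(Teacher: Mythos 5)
Your proposal is correct and follows essentially the same strategy as the paper: the hard inclusions rest on the same simple-monomial-term bookkeeping (a unique minimizing simple term in each factor yields a unique minimizing simple term of the product, and the minimizing terms of the summand realizing $\min_k \pi(\ns{f}_k(a))$ descend to the tropical sum). Your use of \Cref{newinc} together with the containments $\ns{I}_1\cdots\ns{I}_r \subseteq \ns{I}_j$ and $\ns{I}_i \subseteq \sum_{j}\ns{I}_j$ for the easy inclusions is only a mild streamlining of the paper's treatment, which argues those directions directly at the level of polynomials and generators.
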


\begin{proof} 
\begin{enumerate}[label=$(\alph*)$]
\item By induction, it is enough to show $V(\ns{I}) \cup V(\ns{J}) = V(\ns{I}\ns{J})$ where $\ns{I}$ and $\ns{J}$ are two ideals in $\ta{\f{T}}[x]$.  First, we show for polynomials $\ns{f}$ and $\ns{g}$ in $\ta{\f{T}}[x]$ one has $V(\ns{f}) \cup V(\ns{g}) = V(\ns{f}\ns{g})$.  Let $\ns{f}(x)=\sum_n {(a_n+b_n\epsilon)x^n}$ and $\ns{g}(x)=\sum_n {(a'_n+b'_n\epsilon)x^n}$ and assume $a \in V(\ns{f})$.  Then, with notations as above, $\ns{f}(a) |_{\epsilon = 1_{\f{T}}}$ attains the minimum in at least two simple monomials, say $c_rx^r\epsilon^{\delta_r}$ and $c_sx^s\epsilon^{\delta_s}$.  If $\ns{g}(a)= c'_ta^t$ then $(\ns{f}\ns{g})(a) |_{\epsilon = 1_{\f{T}}}$ will achieve its minimum in terms $c_rc'_ta^{r+t}$ and $c_sc'_ta^{s+t}$.  This shows $V(\ns{f}) \subseteq V(\ns{f}\ns{g})$.  By symmetry $V(\ns{g}) \subseteq V(\ns{f}\ns{g})$; therefore, $V(\ns{f}) \cup V(\ns{g}) \subseteq V(\ns{f}\ns{g})$. Conversely, if $a \notin V(\ns{f}) \cup V(\ns{g})$, then $\ns{f}(a)$ and $\ns{g}(a)$ will achieve their minimum in exactly one simple monomial term, let's say $\ns{f}(a) |_{\epsilon = 1_{\f{T}}} = c_ra^r$ and $\ns{g}(a) |_{\epsilon = 1_{\f{T}}} = c'_ta^t$.  Hence, $(\ns{f}\ns{g})(a) |_{\epsilon = 1_{\f{T}}}$ attains the minimum in $c_rc'_ta^{r+t}$ and $a \notin V(\ns{f}\ns{g})$. \vspace{0.1cm}

For the general case, note that $\ns{I}\ns{J}$ is generated by $\ns{h}=\ns{f}\ns{g}$ where $\ns{f} \in \ns{I}$ and $\ns{g} \in \ns{J}$.  Now if $a \in V(\ns{I}) = \bigcap_{\ns{f} \in \ns{I}}V(\ns{f})$ then, by above argument for the polynomial case, for every $\ns{h}=\ns{f}\ns{g}$ one has $a \in V(\ns{h})$.  This implies $V(\ns{I}) \subseteq V(\ns{I}\ns{J})$ and by symmetry $V(\ns{J}) \subseteq V(\ns{I}\ns{J})$; hence $V(\ns{I}) \cup V(\ns{J}) \subseteq V(\ns{I}\ns{J})$.  Conversely, assume $a \notin V(\ns{I}) \cup V(\ns{J})$.  Then $a \notin V(\ns{f})$ for some $\ns{f} \in \ns{I}$ and $a \notin V(\ns{g})$ for some $\ns{g} \in \ns{J}$ and therefore $a \notin V(\ns{f}\ns{g})$ where $\ns{f}\ns{g} \in \ns{I}\ns{J}$.  Thus, $a \notin V(\ns{I}\ns{J})$. \vspace{0.1cm}

\item If $a \in V \left(\sum_{i \in \Lambda}\ns{I}_i \right)$ then $a \in V(\ns{h})$ for every $\ns{h} \in \sum_{i \in \Lambda}\ns{I}_i$, in particular, $a \in V(\ns{f})$ for every $\ns{f} \in \ns{I}_i$ (where $i \in \Lambda$ is arbitrary).  Hence, $a \in V(\ns{I}_i)$ and this implies $a \in \bigcap_{i \in \Lambda} V(\ns{I}_i)$.  Conversely, if $a \notin V \left( \sum_{i \in \Lambda}\ns{I}_i \right)$ then $a \notin V(\ns{h})$ for some $\ns{h} \in \sum_{i \in \Lambda}\ns{I}_i$.  One can write $\ns{h}$ as finite sum $\ns{h}=\sum_{j=1}^m \ns{f}_{i_j}$ where $\ns{f}_{i_j} \in \ns{I}_{i_j}$.  Since $a \notin V(\ns{h})$, we know $\ns{h}(a) |_{\epsilon = 1_{\f{T}}}$ attains its value in exactly one simple monomial term, for instance in term $c_rx^r\epsilon^{\delta_r}$.  Without loss of generality, one can assume $c_rx^r\epsilon^{\delta_r}$ belongs to $\ns{f}_{i_1}$, and hence, $\ns{f}_{i_1}(a) |_{\epsilon = 1_{\f{T}}}$ attains its value in just a single simple monomial term.  Therefore, $a \notin V(\ns{f}_{i_1})$ and $a \notin \bigcap_{i \in \Lambda} V(\ns{I}_i)$. 
\end{enumerate} 
\end{proof}

\begin{cor} \label{cor1}
Let $\ns{f} \in \ta{\f{T}}[x]$ and let $\ns{I} = \langle \ns{f} \rangle$.  Then $V(\ns{I}) = V(\ns{f})$.
\end{cor}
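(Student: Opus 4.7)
The plan is to deduce this from Proposition \ref{newprop}(a), which already did the heavy lifting of showing $V(\ns{f}) \cup V(\ns{g}) = V(\ns{f}\ns{g})$ at the polynomial level. The statement is a principal-ideal version of the union formula, so the proof should reduce entirely to manipulating elements of the principal ideal and invoking the polynomial-case result.

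First I would establish the easy inclusion $V(\ns{I}) \subseteq V(\ns{f})$: since $\ns{f}$ is itself in $\ns{I} = \langle \ns{f} \rangle$, the intersection $V(\ns{I}) = \bigcap_{\ns{g} \in \ns{I}} V(\ns{g})$ is contained in $V(\ns{f})$ as just one of its factors.

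For the reverse inclusion $V(\ns{f}) \subseteq V(\ns{I})$, the main step is to describe the elements of $\ns{I}$ concretely. By \Cref{ideal}, $\ns{I}$ is the set of all finite sums $\sum_i \ns{h}_i \ns{f}$ with $\ns{h}_i \in \ta{\f{T}}[x]$; by distributivity in $\ta{\f{T}}[x]$, such a sum equals $\bigl(\sum_i \ns{h}_i\bigr)\ns{f}$, so every element of $\ns{I}$ has the form $\ns{h}\ns{f}$ for some $\ns{h} \in \ta{\f{T}}[x]$. Applying the polynomial case of \Cref{newprop}\ref{newunion} (proved in the first paragraph of the proof of that proposition) gives $V(\ns{h}\ns{f}) = V(\ns{h}) \cup V(\ns{f}) \supseteq V(\ns{f})$ for every such $\ns{h}$. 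Intersecting over all elements of $\ns{I}$ yields $V(\ns{I}) \supseteq V(\ns{f})$, completing the proof.

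No real obstacle is anticipated; the only mild point to check is the distributivity reduction identifying arbitrary elements of the principal ideal with single products $\ns{h}\ns{f}$, and the observation that the polynomial case of \Cref{newprop}\ref{newunion} is precisely the tool needed. One might even present the corollary in one sentence as an immediate specialization of \Cref{newprop}\ref{newunion} to the principal ideal generated by a single polynomial.
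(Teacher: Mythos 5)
Your proof is correct and follows essentially the same route as the paper's: the easy inclusion from $\ns{f} \in \ns{I}$, and the reverse inclusion by writing each element of $\langle \ns{f} \rangle$ as $\ns{h}\ns{f}$ and applying \Cref{newprop}\ref{newunion} to get $V(\ns{h}\ns{f}) \supseteq V(\ns{f})$. Your explicit distributivity remark justifying that every element of the principal ideal is a single product $\ns{h}\ns{f}$ is a point the paper states without comment, but otherwise the arguments coincide.
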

\begin{proof}
By \Cref{newinc} we have $V(\ns{I}) \subseteq V(\ns{f})$. On the other hand, each $\ns{g} \in \ns{I}$ is in the form of $\ns{g} = \ns{f}\ns{h}$ for some $\ns{h} \in \ta{\f{T}}[x]$.  By \Cref{newprop}\ref{newunion} $V(\ns{g}) = V(\ns{f}) \cup V(\ns{h})$.  So, for every $\ns{g} \in \ns{I}$ we have $V(\ns{g}) \supseteq V(\ns{f})$ and this implies $V(\ns{I}) \supseteq V(\ns{f})$.
\end{proof}

\begin{remark}
In view of the \Cref{coincide}, one can see \Cref{knownprop} as a special case of \Cref{newprop}.  Note, however, that we are using \Cref{knownprop} to prove it. 
\end{remark}

\begin{proof}[proof of \Cref{coincide}]
First, we show that for ideal $\ns{I}$ in $\ta{\f{T}}[x]$ with generators $\{ \ns{f}_i \}_{i \in \Lambda}$ one has $V(\ns{I}) = \bigcap_{i \in \Lambda} V(\ns{f}_i)$.  Let $\ns{I}_i = \langle \ns{f}_i \rangle$ and note that $\ns{I} = \sum_{i \in \Lambda} \ns{I}_i$.  By \Cref{newprop}\ref{newintersection} and \Cref{cor1}
\begin{align*}
V(\ns{I}) = \bigcap_{i \in \Lambda} V(\ns{I}_i) = \bigcap_{i \in \Lambda} V(\ns{f}_i)
\end{align*}
  
Now, pick a family of generators $\{ f_i \}_{i \in \Lambda}$ for $I$.  Mind that for polynomial $f \in \f{T}[x]$ the \Cref{nhctropvar} for $V(f)$ coincides with the classical tropical variety $V(f)$ (see \Cref{ideal}).  Because $\{ f_i \}_{i \in \Lambda}$ is also a generator for $\ns{I}$ so we can write 
\begin{align*}
V(\ns{I}) = \bigcap_{i \in \Lambda} V(f_i) = V(I)
\end{align*}
\end{proof}

\section{Tropical Zariski Topology} \label{Tropical Zariski Topology} 

One can follow the ideas from algebraic geometry and define \textit{tropical topology} on $\f{T}^k$ by defining the closed sets to be the tropical varieties.  More precisely, $X \subseteq \f{T}^k$ is closed if and only if $X = \f{T}^k$ or $X=V(I)$ for some ideal $I$ in $\f{T}[x]$.  The ideal $I$, in contrast to classical algebraic geometry, is not necessarily finitely generated.  In other words, the \textit{Hilbert Basis Theorem} for semiring $\f{T}[x]$ fails in the sense that some ideals of $\f{T}[x]$ are not finitely generated. 
 
\begin{ex} \label{infgen}
The ideal $I = \langle x^n +1 : n \geq 1 \rangle$ in $\f{T}[x]$ is not finitely generated.  To see this, consider the ideal $J = \langle x^{n_1} +1, x^{n_2}+1, ..., x^{n_r}+1 \rangle$ and let $N > \max_i \{ n_i \}$.  Elements of $J$ are of the form 
\begin{align}
p_1(x^{n_1} +1) + \dots + p_r(x^{n_r} +1) \label{summ}
\end{align}
where $p_i$'s belong to $\f{T}[x]$.  Since monomial terms in \eqref{summ} do not cancel each other, it could be equal to $x^N + 1$ only if all $p_i$'s are $0_{\f{T}}$ but one, say $p_1$, which should be a monomial: $p_1 = ax^m$.  Then $ax^m(x^{n_1} +1)$ can not be equal to $x^N + 1$, \textit{i.e.}, $x^N + 1$ does not belong to $J$. 
\end{ex}

To verify that $\f{T}^k$ alongside subsets of the form $V(I)$ define a topology on $\f{T}^k$, note that the whole space $\f{T}^k$ and $V(1_{\f{T}}) = \emptyset$ are closed.  Furthermore, by \Cref{knownprop}\ref{knownunion} finite union of closed sets is closed and by \Cref{knownprop}\ref{knownintersection} arbitrary intersection of closed sets is closed.  One hurdle in working with this topology that shows up immediately is that the induced topology on tropical variety $V(I) \subseteq \f{T}^k$ depends on the embedding.  This is illustrated by the following simple example. 

\begin{ex} \label{embedding}
Let $V$ be the affine line $\f{T}^1$ and consider the embedding $i : \f{T}^1 \hookrightarrow \f{T}^2$ such that $i(a)=(a, 0)$, namely, $i(\f{T}^1) = V(x_2 + 0) \subset \f{T}^2$.  Then, in the induced topology, the line segment $\{ (a, 0) : a \in [0, 1] \}$ is a closed subset of $V(x_2 + 0)$ because it is cut out by $V(x_1^2 +x_1x_2 + x_1 + 1)$ (see \Cref{embedded}).  On the other hand, if one embeds the affine line into itself via the identity map, then $[0, 1] \subset \f{T}^1$ can not be a closed subset because closed subsets of $\f{T}^1$ are its finite subsets. 
\end{ex} 
\begin{figure}[H]
\captionsetup{font=footnotesize}
\begin{center}
\begin{tikzpicture}
\draw [->] (-2,0) -- (2.3,0);
\draw [->] (0,-1.5) -- (0,2);
\draw [line width=0.5mm](-1.3,-1.3) -- (0,0) -- (1,0) -- (1, 1.5);
\draw [line width=0.5mm](0, 1.5) -- (0,0) -- (1,0) -- (2.3, -1.3);
\draw (0.8,-0.7) node [below]{$x_1+x_2$};
\draw (2,0.5) node {$1$};
\draw (0.5,0.5) node [above]{$x_1$};
\draw (-0.8,0.5) node [left]{$2x_1$};
\end{tikzpicture}
\caption{If one embeds $\f{T}^1$ in $\f{T}^2$ via $i(a)=(a, 0)$, then the line segment $[0, 1]$ is a closed in $\f{T}^1$ since it is cut out by $V(x_1^2 +x_1x_2 + x_1 + 1)$.}
\label{embedded}
\end{center}
\end{figure}
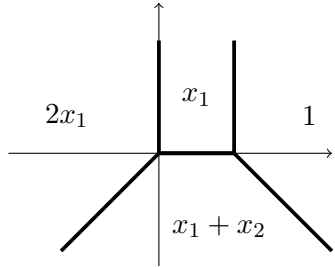

We will address this issue by working over tropical dual numbers, in other words, the \textit{tropical Zariski topology} on $\f{T}^k$ is the topology whose closed sets are $V(\ns{I})$ where $\ns{I}$ is an ideal in $\ta{\f{T}}[x]$.  This is in fact a topology because
\begin{enumerate}[label=$(\roman*)$]
\item The empty set $V(1_{\ta{\f{T}}}) = \emptyset$ and the whole space $V(x+\epsilon x)= \f{T}^k$ are closed.
\item By \Cref{newprop} finite union and arbitrary intersection of closed subsets are again closed. 
\end{enumerate} 
Moreover, any subset $X$ of $\f{T}^k$ will be equipped with the topology induced from the tropical Zariski topology on $\f{T}^k$, which will be called the \textit{tropical Zariski topology on $X$}.  In particular, a tropical variety $V$ is equipped with the this topology such that a subset $W$ of $V$ is closed in $V$ if and only if $W = V \cap W'$ for some closed set $W'$ in $\f{T}^k$. \vspace{0.1cm}
 
Our next goal in this section is to show that our tropical Zariski topology and the standard (Euclidean) topology on $\f{T}^k$ are the same.  To be precise, by the standard topology on $\f{T}$ we mean the topology with basis of open intervals either of the form $(a, b)$ or of the form $(c, \infty]$ where $a, b, c \in \f{R}$.  Then, the standard topology on $\f{T}^k$ will be the product topology, namely, it has basis of opens $\nc{B} = \{ \prod_{i=1}^k B_i \}$ where each $B_i \subseteq \f{T}$ is either of the form $(a_i, b_i)$ or of the form $(c_i, \infty]$ where $a_i, b_i, c_i \in \f{R}$.  

\begin{thm} \label{etztopo}
The tropical Zariski topology on $\f{T}^k$ is equal to the standard topology.  
\end{thm}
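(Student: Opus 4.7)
The plan is to prove the two inclusions between the tropical Zariski topology and the Euclidean topology on $\f{T}^k$ separately. The easy direction is that every Zariski-closed set $V(\ns{I})$ is Euclidean closed: writing $\pi(\ns{f})(a) = \min_j\{c_j + m_j\cdot a\}$ as a minimum over the finitely many simple monomial terms $c_j x^{m_j}\epsilon^{\delta_j}$ of $\ns{f}$, this is a continuous, piecewise-linear function of $a$, and $V(\ns{f})$ decomposes as a finite union of closed polyhedra on which two prescribed simple monomials simultaneously attain the minimum. Intersecting over $\ns{f}\in\ns{I}$ preserves Euclidean closedness, giving the first inclusion.

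For the reverse direction, it will suffice to show every basic Euclidean open $\prod_{i=1}^k B_i$ (with each $B_i$ equal to $(a_i,b_i)$ or $(c_i,\infty]$) is Zariski open. The crucial step is to realize single-coordinate half-slabs $\{a : a_i\le c\}$ and $\{a : a_i\ge c\}$ as vanishing loci of single polynomials over $\ta{\f{T}}$. Using the $\epsilon$-doubling mechanism described in \Cref{rmrken}, I would take
\[
\ns{f}^-_{i,c}(x) = c + (1_{\f{T}} + 1_{\f{T}}\epsilon)\,x_i, \qquad \ns{f}^+_{i,c}(x) = (1_{\f{T}} + 1_{\f{T}}\epsilon) + (-c)\,x_i,
\]
each of which pairs one monomial with its $\epsilon$-twin in order to fill in the full-dimensional polyhedron on the "$x_i$" (respectively, constant) side. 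A short case analysis of the three simple monomial terms, in the spirit of \Cref{xmplen}, then identifies $V(\ns{f}^-_{i,c})$ and $V(\ns{f}^+_{i,c})$ with the desired half-slabs.

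Once the half-slabs are in hand the remainder is formal. For each coordinate $i$, the set $\{a : a_i\notin B_i\}$ is a finite union of at most two such slabs, hence Zariski closed by \Cref{newprop}\ref{newunion}, so $\{a : a_i\in B_i\}$ is Zariski open. Then the box $\prod_i B_i = \bigcap_i \{a : a_i \in B_i\}$ is a finite intersection of Zariski opens and therefore Zariski open. The main obstacle I expect is precisely the half-slab construction: over the ordinary tropical semiring $\f{T}$ the sets $V(f)$ are pure codimension-one polyhedral complexes and cannot enclose a half-space, so the entire argument rests on exploiting the $\epsilon$-doubling device afforded by $\ta{\f{T}}$.
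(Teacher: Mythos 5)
Your proposal is correct and follows essentially the same route as the paper: the forward inclusion via the polyhedral decomposition of each $V(\ns{f})$ into finitely many closed cells, and the reverse inclusion by cutting out coordinate half-slabs with $\epsilon$-doubled polynomials (your $c + (1_{\f{T}}+1_{\f{T}}\epsilon)x_i$ is literally the paper's $x_i+\epsilon x_i+c_i$, and your second slab polynomial plays the role of the paper's $x_i+b_i+\epsilon b_i$). The only cosmetic difference is that you take two polynomials per coordinate and intersect per-coordinate opens, whereas the paper packages the interval case into a single product polynomial and writes $B^c$ as a union of the resulting varieties.
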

\begin{proof}
To see that each open basis $B = \prod_{i=1}^k B_i $ is also open in the tropical Zariski topology, consider the following polynomials
\begin{equation*}
\ns{f}_i(x) = \begin{cases}
(x_i+\epsilon x_i +a_i)(x_i+b_i + \epsilon b_i)  & \text{if } B_i =  (a_i, b_i) \\
x_i+\epsilon x_i +c_i  & \text{if } B_i = (c_i, \infty]
\end{cases}
\end{equation*}
with corresponding tropical varieties 
\begin{equation*}
V(\ns{f}_i) = \begin{cases}
(-\infty, a_i] \cup [b_i, \infty]  & \text{if } B_i =  (a_i, b_i) \\
(-\infty, c_i]  & \text{if } B_i = (c_i, \infty]
\end{cases}
\end{equation*} 

Since $\bigcup_{i=1}^k V(\ns{f}_i) = B^c$ is closed in the tropical topology we conclude that $B$ is open.  This proves that the tropical Zariski topology is finer than the standard topology. \vspace{0.1cm}

On the other hand, if $U \subseteq \f{T}^k$ be open in our tropical topology, then $U^c$ is closed and one can write $U^c = V(\ns{I})$ for some ideal $\ns{I}$ in $\ta{\f{T}}[x]$.  Pick a collection of generators for $\ns{I}$, say $\ns{I} = \langle \ns{f}_i \rangle_{i \in \Lambda}$.  According to \Cref{rmrken}, each $V(\ns{f}_i)$ is a finite union of closed polyhedra (some of these polyhedra could have dimension less than $k$), therefore, each $V(\ns{f}_i)$ is closed in the standard topology.  Hence, by \Cref{newprop}\ref{newintersection}, $V(\ns{I}) = \bigcap_{i \in \Lambda} V(\ns{f}_i)$ is closed and $U$ is open in the standard topology. 
\end{proof}

The essential property of tropical Zariski topology, that follows from \Cref{etztopo}, is that the induced topology on tropical variety $V$ is independent of the choice of embedding into an affine tropical space.  On the other hand, in \cite{GG2}, they defined \textit{strong Zariski topology} on $\f{T}^k$ by using the $\f{T}$-points of arbitrary subschemes as the closed sets and they show this topology is exactly the Euclidean topology (see lemma 3.4.4 in \cite{GG2}).  Their idea is that, in strong Zariski topology, closed subsets of $\f{T}^k$ are defined by congruence varieties, \textit{i.e.}, equations of the form $f = g$ where $f, g \in \f{T}[x]$.  Combining this with \Cref{Enhanced} one can deduce \Cref{etztopo} too. \vspace{0.1cm}

Let $X = \p{A}$ be an affine variety over valued field $K$.\footnote{We assume $K$ is an algebraically closed field which is complete with respect to some nontrivial valuation $v : K \to \f{T}$.  This implies that the value group $v(K)$ is dense in $\f{T}$.} If $i : X \hookrightarrow \f{A}^m$ is an embedding into an affine space, then there is natural tropicalization $\textbf{Trop}(X, i) \subseteq \f{T}^m$ of $X$ with respect to this embedding (see \cite{SP} for the details).  These tropicalizations, equipped with the Euclidean topology induced from $\f{T}^m$, form an inverse system in a natural way.  Recall that the Berkovich space $X^{\text{an}}$ of $X$ is the set of all semi-valuations $v_x$ on $A$ extending the valuation on $K$ with the weakest topology such that for each $a \in A$ the evaluation map $X^{\text{an}} \to \f{T} \colon v_x \mapsto v_x(a)$ is continuous.  By theorem of Payne, see \cite{SP} Theorem 1.1, the natural map from $X^{\text{an}}$ to the inverse limit $\varprojlim_i \textbf{Trop}(X, i)$ is homeomorphism. In \cite{GG2} they show that one can obtain the Berkovich space $X^{\text{an}}$ by a single tropicalization with respect to embedding into an infinite dimensional affine space which is called the universal tropicalization of $X$, \textit{i.e.}, $X^{\text{an}} \cong \textbf{Trop}_{\text{univ}}(X)$.

\section{Tropical Varieties and Congruence Varieties} \label{Tropical Varieties and Congruence Varieties}

In this section we show congruence varieties could be thought as (classical) tropical varieties living in higher dimensions.  In fact, Let $i: \f{T}^k \to \f{T}^{k+1}$ be the horizontal embedding $i(a_1, a_2, ...,a_k) = (a_1, a_2, ..., a_k, 0)$ and let $E$ be a congruence of $\f{T}[x_1, ..., x_k]$.  We shall construct ideal $I$ in $\f{T}[x_1, ..., x_k, y]$ such that $i(V(E))=V(I)$.

\begin{remark}
This construction is far from being a corresponding between congruences and ideals.  We do not even associate to congruence $E$ (respectively, congruence variety $V(E)$) a specific ideal $I$ (respectively, tropical variety $V(I)$) with above property.  See \Cref{Ex14,Ex15}.
\end{remark}

\begin{ex} \label{Ex14}
Let $E = \langle x^2 + x + 1 \sim x \rangle$ and consider the congruence variety $V(E) = [0, 1]$.  Now let $I = \langle x^2+xy+x+1y, y+0 \rangle$ and $J = \langle x^2 + xy + 2y^2 + x + 1y + 2, y+0 \rangle$ be ideals in $\f{T}[x, y]$.  Then $i(V(E)) = V(I) = V(J)$.
\end{ex}

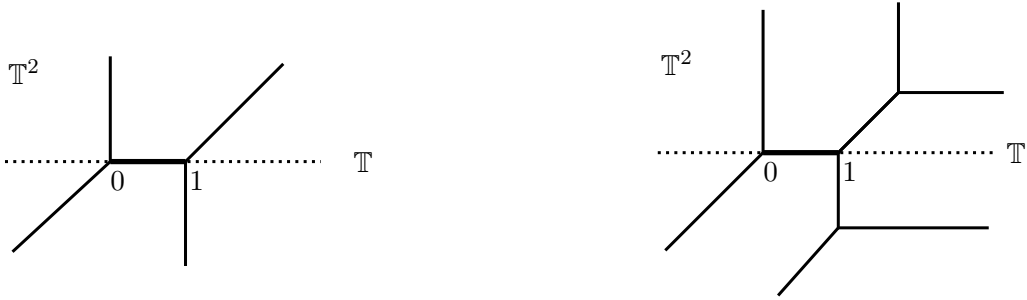
\begin{figure}[H]
\captionsetup{font=footnotesize}
\begin{center}
\begin{subfigure}[t]{.44\textwidth}
\centering
\begin{tikzpicture}
\draw [line width=0.4mm](-2.3,-1.2) -- (-1,0) -- (-1,1.4);
\draw [line width=0.4mm](-1,0) -- (0,0) -- (1.3, 1.3);
\draw [line width=0.4mm](0,-1.4) -- (0,0);
\draw [line width=0.4mm, dotted](-2.4,0) -- (1.8,0);
\draw [line width=0.7mm](-1,0) -- (0, 0);
\fill[white] (-0.1, -1.8) rectangle (0.1, -1.4);
\draw (-1.13,0) node [below right]{$0$};
\draw (-0.08,0) node [below right]{$1$};
\draw (2.1, 0) node [right]{$\f{T}$};
\draw (-1.8, 0.9) node [above left]{$\f{T}^2$};
\end{tikzpicture}
\end{subfigure}  
\hfill
\begin{subfigure}[t]{.44\textwidth}
\centering
\begin{tikzpicture}
\draw [line width=0.4mm](-2.3,-1.3) -- (-1,0) -- (-1,1.9);
\draw [line width=0.4mm](-1,0) -- (0,0) -- (0.8,0.8) -- (0.8,2);
\draw [line width=0.4mm](0,-1) -- (0,0) -- (0.8,0.8) -- (2.2,0.8);
\draw [line width=0.4mm](-0.8,-1.9) -- (0,-1) -- (2,-1);
\draw [line width=0.4mm, dotted](-2.4,0) -- (2.1,0);
\draw [line width=0.7mm](-1,0) -- (0, 0);
\draw (-1.13,0) node [below right]{$0$};
\draw (-0.08,0) node [below right]{$1$};
\draw (2.1, 0) node [right]{$\f{T}$};
\draw (-1.8, 0.9) node [above left]{$\f{T}^2$};
\end{tikzpicture}
\end{subfigure}
\caption{If one embeds congruence variety $V(E)=[0,1]$ into $\f{T}^2$, then it could be identified with tropical varieties in different ways.}
\label{proj}
\end{center}
\end{figure}

\begin{lemma}\label{lem1}
For congruence $E=\langle x^n +cx^m \sim cx^m \rangle$ of $\f{T}[x_1, x_2, ..., x_k]$ there is an ideal $I$ in $\f{T}[x_1, x_2, ..., x_k, y]$ such that $i\left(V(E)\right)=V(I)$.
\end{lemma}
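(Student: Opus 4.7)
The plan is to exhibit an explicit ideal and verify the equality $i(V(E)) = V(I)$ via two short bend-locus computations combined through \Cref{knownprop}\ref{knownintersection}. Set
\[
\ns{f}(x, y) := x^n + cx^m y + cx^m \in \f{T}[x_1, \ldots, x_k, y]
\]
and take $I := \langle \ns{f},\, y + 0 \rangle$. Then $V(I) = V(\ns{f}) \cap V(y + 0)$, so it suffices to analyze these two sets separately and intersect.

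Unpacking the single generating relation of $E$, one has $V(E) = \{a \in \f{T}^k : \min(a^n, c + a^m) = c + a^m\} = \{a : a^n \geq c + a^m\}$, where on the right I use classical additive arithmetic on $\f{R}$. The second factor is easy: the polynomial $y + 0$ evaluates to $\min(b, 0)$ on $\f{T}^{k+1}$, whose bend locus is precisely the hyperplane $\{b = 0\}$, so $V(y + 0) = \f{T}^k \times \{0\}$. In particular $V(I) \subseteq \f{T}^k \times \{0\}$, and the whole problem reduces to computing $V(\ns{f})$ along the slice $y = 0$.

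The key observation is then the following. Evaluated at $(a, 0)$, the three monomial terms of $\ns{f}$ take the values $a^n$, $c + a^m + 0$, and $c + a^m$; the last two coincide automatically on the slice $\{y = 0\}$, coming from the distinct monomials $cx^m y$ and $cx^m$. Hence the minimum is attained in at least two distinct monomial terms iff that minimum equals $c + a^m$, equivalently $a^n \geq c + a^m$; when $a^n < c + a^m$ the unique minimizer is $x^n$ and $(a,0) \notin V(\ns{f})$. Combining with the previous paragraph, $V(I) = \{(a, 0) : a^n \geq c + a^m\} = i(V(E))$, as required.

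The only genuinely delicate point is the bookkeeping just used: even though $cx^m y$ and $cx^m$ take identical real values on the slice $\{y = 0\}$, they remain distinct monomials of $\ns{f}$, and that is exactly what forces the bend condition to collapse to the one-sided inequality cutting out $V(E)$. With that observation in hand the rest is a one-line computation, and no harder step is anticipated; a sanity check against \Cref{Ex14} (or the small case $n = 2$, $m = 1$, $c > 0$) confirms the construction.
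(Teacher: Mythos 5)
Your proof is correct and follows essentially the same route as the paper: the paper takes $I=\langle y(x^n+cx^m)+cx^m,\, y+0\rangle$, which differs from your $\langle x^n+cx^my+cx^m,\, y+0\rangle$ only in whether $y$ also multiplies $x^n$, and on the slice $\{y=0\}$ both generators yield the same three values $a^n,\ c+a^m,\ c+a^m$ and hence the same bend condition $c+\sum m_ia_i\le\sum n_ia_i$. The key observation you isolate (the two copies of $cx^m$, with and without $y$, tying on the slice) is exactly the mechanism in the paper's proof.
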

\begin{proof}
Let $I=\langle y(x^n +cx^m)+cx^m , y+0 \rangle$.  Then the nonlinear locus of $I$ is the intersection of nonlinear loci of $y(x^n +cx^m)+cx^m$ and $y+0$.  That is all points $(a_1, a_2, ..., a_k, 0) \in \f{T}^{k+1}$ where $\min \{ \sum n_ia_i, c+\sum m_ia_i, c+\sum m_ia_i \}$ attains the minimum in at least two terms which is exactly the half-space $\{ (a,0) : c+\sum m_ia_i \le \sum n_ia_i \}$.  This completes the proof since $V(E)$ is the half-space $\{ a : c+\sum m_ia_i \le \sum n_ia_i \}$.
\end{proof}

Recall that a \textit{convex} subset of $\f{T}^k$ is just an intersection of (possibly infinitely many) half-spaces.  We are allowing intersections of infinitely many half-spaces since congruences and ideals of $\f{T}[x]$ are not necessarily finitely generated, see \Cref{infgen}.  If one prefers to work with finitely generated congruences and ideals then, in the following, they could replace "convex subset" with "polyhedron". 

\begin{lemma}\label{lem2}
If $P$ is a closed convex subset of $\f{T}^k$, then there is an ideal $I$ in $\f{T}[x_1, x_2, ..., x_k, y]$ such that $i\left(P\right)=V(I)$.
\end{lemma}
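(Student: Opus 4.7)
The plan is to decompose $P$ into half-spaces, apply \Cref{lem1} to each to produce an ideal in $\f{T}[x_1, \ldots, x_k, y]$, and assemble the result by summing those ideals. By the definition of convex subset recalled just before the lemma, I would first write $P = \bigcap_{\lambda \in \Lambda} H_\lambda$, where each $H_\lambda$ is a half-space. Each such $H_\lambda$ can be put in the form handled by \Cref{lem1}, namely
\[
H_\lambda = \Bigl\{a \in \f{T}^k : c_\lambda + \sum_i (m_\lambda)_i a_i \le \sum_i (n_\lambda)_i a_i\Bigr\}
\]
for some $c_\lambda \in \f{T}$ and $m_\lambda, n_\lambda \in \f{Z}^k_{\ge 0}$.

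Next, for each $\lambda$, \Cref{lem1} furnishes an explicit ideal
\[
I_\lambda = \bigl\langle y(x^{n_\lambda} + c_\lambda x^{m_\lambda}) + c_\lambda x^{m_\lambda},\; y + 0 \bigr\rangle
\]
in $\f{T}[x_1, \ldots, x_k, y]$ with $V(I_\lambda) = i(H_\lambda)$. Setting $I := \sum_{\lambda \in \Lambda} I_\lambda$, \Cref{knownprop}\ref{knownintersection} gives
\[
V(I) = \bigcap_{\lambda \in \Lambda} V(I_\lambda) = \bigcap_{\lambda \in \Lambda} i(H_\lambda).
\]
Every $V(I_\lambda)$ is contained in the coordinate hyperplane $\{y = 0\}$, because the generator $y + 0$ cuts out precisely that hyperplane, and $i$ restricts to a bijection from $\f{T}^k$ onto $\{y = 0\}$; consequently $\bigcap_\lambda i(H_\lambda) = i\bigl(\bigcap_\lambda H_\lambda\bigr) = i(P)$, as required. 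The degenerate case $P = \f{T}^k$ (empty intersection of half-spaces) is handled separately by taking $I = \langle y + 0 \rangle$, whose variety is exactly $i(\f{T}^k)$.

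The argument is essentially a packaging of \Cref{lem1} with the sum/intersection duality for tropical ideals, so the main obstacle is conceptual rather than computational: I must ensure that the half-space representation of $P$ uses only half-spaces of the precise form supplied by \Cref{lem1}, i.e.\ with integer exponent vectors and a real (or possibly infinite) constant shift. The paper's working definition of convex subset as an intersection of tropical half-spaces of this very shape makes this compatibility automatic, so no extra refinement or rational approximation is needed; the proof then reduces to the bookkeeping above.
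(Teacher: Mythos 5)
Your proposal is correct and follows essentially the same route as the paper: write $P$ as an intersection of half-spaces, apply \Cref{lem1} to each, and conclude via \Cref{knownprop}\ref{knownintersection} that $V\bigl(\sum_\lambda I_\lambda\bigr) = \bigcap_\lambda i(H_\lambda) = i(P)$. The only additions are minor bookkeeping (the degenerate case $P=\f{T}^k$ and the remark that $i$ is a bijection onto $\{y=0\}$), which the paper leaves implicit.
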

\begin{proof}
The subset $P$ can be written as an intersection of half-spaces $P=\bigcap_j H_j$.  One applies \Cref{lem1} to each half-space $H_j$ and gets ideal $I_j$ in $\f{T}[x_1, x_2, ..., x_k, y]$ such that $i(H_j) = V(I_j)$.  Then
\begin{align*}
i(P) = i \Big( \bigcap_j H_j \Big)= \bigcap_j i \big( H_j \big)= \bigcap_j V \big(I_j\big) = V \Big( \sum_j I_j \Big)
\end{align*} 
where the last equality follows from \Cref{knownprop}\ref{knownintersection}.  In fact, with above notations, if $I_j = \langle f_j , y+0 \rangle$ where $j \in \Lambda$, then $I = \langle f_j, y+0 \rangle_{j \in \Lambda}$.
\end{proof}

\begin{lemma} \label{poly}
If $X \subseteq \f{T}^k$ is a finite union of convex subsets of $\f{T}^k$ then $i\left(X\right)=V(I)$ for some ideal $I$ in $\f{T}[x_1, x_2, ..., x_k, y]$.
\end{lemma}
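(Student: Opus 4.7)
The proof should follow immediately from the previous lemma combined with \Cref{knownprop}\ref{knownunion}. The plan is to write $X$ as a finite union of (closed) convex subsets, apply \Cref{lem2} to each piece to realize its horizontal embedding as a tropical variety, and then use the fact that finite unions of tropical varieties correspond to products of the associated ideals.

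More concretely, suppose $X = \bigcup_{j=1}^r P_j$ where each $P_j$ is a convex subset of $\f{T}^k$ (hence, by definition, a closed intersection of half-spaces). By \Cref{lem2}, for each $j$ there is an ideal $I_j$ in $\f{T}[x_1, \dots, x_k, y]$ such that $i(P_j) = V(I_j)$. Since $i$ is an injection, it commutes with unions of subsets of $\f{T}^k$, so
\begin{align*}
i(X) = i \Big( \bigcup_{j=1}^r P_j \Big) = \bigcup_{j=1}^r i(P_j) = \bigcup_{j=1}^r V(I_j).
\end{align*}
Now applying \Cref{knownprop}\ref{knownunion} to the right-hand side yields $i(X) = V(I_1 \cdots I_r)$, so taking $I = I_1 \cdots I_r$ completes the proof.

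I do not expect any real obstacle here: the only subtlety worth flagging is that convex subsets in the sense of the paper are automatically closed (being intersections of half-spaces), which is exactly what \Cref{lem2} requires. Everything else is a direct assembly of \Cref{lem2} and the union formula for tropical varieties.
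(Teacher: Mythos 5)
Your proposal is correct and is essentially the paper's own argument: decompose $X = \bigcup_{j=1}^r P_j$, apply \Cref{lem2} to each $P_j$ to get ideals $I_j$ with $i(P_j) = V(I_j)$, and conclude $i(X) = V(I_1 \cdots I_r)$ via \Cref{knownprop}\ref{knownunion}. Your remark that convexity in the paper's sense already gives closedness, as required by \Cref{lem2}, is also consistent with how the paper uses that lemma.
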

\begin{proof}
Let $X=\bigcup_{j=1}^r P_j$.  For every $0 \le j \le r$, by \Cref{lem2}, there is an ideal $I_j$ of $\f{T}[x_1, x_2, ..., x_k, y]$ such that $i(P_j)=V(I_j)$.  Then
\begin{align*}
i(X) = i \Big( \bigcup_{j=1}^r P_j \Big) = \bigcup_{j=1}^r i \big(P_j\big) = \bigcup_{j=1}^r V\big(I_j\big) = V\big(I_1 ... I_r\big)
\end{align*}
where the last equality holds by \Cref{knownprop}\ref{knownunion}.  
\end{proof}

\begin{prop}\label{polypro}
Let $E$ be a congruence of $\f{T}[x]$.  Then $i(V(E))=V(I)$ for some ideal $I$ in $\f{T}[x_1, x_2, ..., x_k, y]$.
\end{prop}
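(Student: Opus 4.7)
The plan is to reduce to the previous lemmas by showing that for each single relation $f \sim g$ the locus $\{a \in \f{T}^k : f(a) = g(a)\}$ is a finite union of closed convex subsets of $\f{T}^k$, and then intersect over the generators of $E$.

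First I would fix generators $\{f_\alpha \sim g_\alpha\}_{\alpha \in \Lambda}$ of $E$, so that by Proposition 3.1 of \cite{BE} (already quoted in the text)
\begin{align*}
V(E) \;=\; \bigcap_{\alpha \in \Lambda}\, W_\alpha, \qquad W_\alpha := \{a \in \f{T}^k : f_\alpha(a) = g_\alpha(a)\}.
\end{align*}
Since $i$ is injective, this yields $i(V(E)) = \bigcap_\alpha i(W_\alpha)$, so it suffices to express each $i(W_\alpha)$ as the nonlinear locus of an ideal in $\f{T}[x_1,\ldots,x_k,y]$ and then use \Cref{knownprop}\ref{knownintersection} to sum the ideals.

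The heart of the argument is showing that $W_\alpha$ is a finite union of closed convex subsets of $\f{T}^k$, so that \Cref{poly} applies. Writing $f_\alpha = \sum_n c_n x^n$ and $g_\alpha = \sum_m d_m x^m$ as finite tropical sums, the condition $f_\alpha(a) = g_\alpha(a)$ says that there exist indices $n^\ast, m^\ast$ whose monomials simultaneously attain the minimum in $f_\alpha$ and in $g_\alpha$ and agree in value at $a$. Thus
\begin{align*}
W_\alpha \;=\; \bigcup_{n^\ast,\, m^\ast}\; \Bigl\{a : c_{n^\ast} + n^\ast \!\cdot\! a \,=\, d_{m^\ast} + m^\ast \!\cdot\! a,\ \ c_{n^\ast} + n^\ast \!\cdot\! a \le c_n + n \!\cdot\! a\ \forall n,\ \ d_{m^\ast} + m^\ast \!\cdot\! a \le d_m + m \!\cdot\! a\ \forall m \Bigr\},
\end{align*}
which is a \emph{finite} union of closed polyhedra, hence of closed convex sets in the sense of the preceding discussion. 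By \Cref{poly} there is an ideal $I_\alpha \subseteq \f{T}[x_1,\ldots,x_k,y]$ with $i(W_\alpha) = V(I_\alpha)$.

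Finally, I would assemble the pieces: setting $I := \sum_{\alpha \in \Lambda} I_\alpha$, \Cref{knownprop}\ref{knownintersection} gives
\begin{align*}
i(V(E)) \;=\; \bigcap_{\alpha \in \Lambda} i(W_\alpha) \;=\; \bigcap_{\alpha \in \Lambda} V(I_\alpha) \;=\; V\!\left(\sum_{\alpha \in \Lambda} I_\alpha\right) \;=\; V(I).
\end{align*}
The only subtle point is the decomposition of $W_\alpha$ as a finite union of polyhedra; once that is in hand, the rest is formal bookkeeping via \Cref{lem1,lem2,poly} and \Cref{knownprop}. I expect the polyhedral decomposition step to be the main obstacle, chiefly because one has to spell out carefully how the equality $\min(\cdots) = \min(\cdots)$ unfolds into a distributive union of finitely many conjunctions of affine (in)equalities; everything else follows by stringing together the previously established lemmas.
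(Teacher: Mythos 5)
Your argument is correct, and it uses the same ingredients as the paper (the decomposition of a single equality locus $\{f_\alpha = g_\alpha\}$ into finitely many closed polyhedra, then \Cref{lem1,lem2,poly}), but the assembly is ordered differently. The paper first asserts that each $V(E_j)$ is a finite union of polyhedra, then claims that the whole intersection $V(E)=\bigcap_j V(E_j)$ is itself a finite union of convex subsets of $\f{T}^k$, and applies \Cref{poly} once to that set; you instead apply \Cref{poly} separately to each generator's locus $W_\alpha$, obtaining ideals $I_\alpha$ with $i(W_\alpha)=V(I_\alpha)$, and then glue with $\bigcap_\alpha V(I_\alpha)=V\bigl(\sum_\alpha I_\alpha\bigr)$ from \Cref{knownprop}\ref{knownintersection}, using injectivity of $i$ to commute $i$ with the intersection. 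Your version buys two things: it makes explicit the polyhedral decomposition of $W_\alpha$ (which the paper only asserts ``since $f_j$ and $g_j$ are polynomials''), and, more substantively, it avoids the paper's claim that a possibly infinite intersection of finite unions of polyhedra is again a finite union of convex sets --- a step the paper does not justify when the generating set $\Lambda$ is infinite (which the paper explicitly allows, cf.\ \Cref{infgen}), whereas \Cref{knownprop}\ref{knownintersection} holds for arbitrary families, so your route is robust in that case. The paper's route is marginally shorter when $E$ is finitely generated, since distributing a finite intersection over finite unions immediately exhibits $V(E)$ as a finite union of polyhedra. The point you flagged as the main obstacle is handled exactly as you propose: $f_\alpha(a)=g_\alpha(a)$ unfolds over the finitely many choices of minimizing monomials on each side into conjunctions of affine inequalities (after rewriting each comparison in the one-sided form required by \Cref{lem1}, as the paper itself does in \Cref{Ex15}), so that step is sound.
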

\begin{proof}
Assume $E=\langle f_j \sim g_j \rangle_{j \in \Lambda}$.  For each $j \in \Lambda$ the congruence variety of $E_j = \langle f_j \sim g_j \rangle$ is
\begin{align*}
V \big(E_j\big) = \big\{ a \in \f{T}^k : f_j(a) = g_j(a) \big\}
\end{align*}
which is a finite union of polyhedra in $\f{T}^k$ since $f_j$ and $g_j$ are polynomials in $\f{T}[x]$.  So, the congruence variety $V(E)=\bigcap_{j \in \Lambda} V(E_j)$ is a finite union of convex subsets of $\f{T}^k$.  Then \Cref{poly} guarantees the existence of an ideal $I$ in $\f{T}[x_1, x_2, ..., x_k, y]$ such that $i(V(E))=V(I)$.
\end{proof}

\begin{remark} 
For given congruence $E$, in order to construct ideal $I$ as in \Cref{polypro}, one mostly needs at first to construct a new congruence $E'$ with generators as in \Cref{lem1} such that $V(E')=V(E)$.  In the following, we explain it by an example.
\end{remark}

\begin{ex} \label{Ex15}
Let $E = \langle x_1+x_2+0 \sim 0 \rangle$.  Then $V(E) = \{ (a_1, a_2) \colon a_1, a_2 \geq 0 \}$ is the first quadrant in $\f{T}^2$ which can be written as the intersection of two half-spaces: $V(E) = H_1 \cap H_2$ where $H_i = \{(a_1, a_2) \colon a_i \geq 0 \}$ for $i=1, 2$.  The half-space $H_i$ is defined by $x_i \geq 0$ so we consider linear relation $x_i + 0 \sim 0$.  Then we apply \Cref{lem1} to $E'=\langle x_1 + 0 \sim 0, x_2 + 0 \sim 0 \rangle$ and get $I = \langle x_1+y+0, x_2+y+0, y+0  \rangle$.  We are not allowed to apply \Cref{lem1} to $E = \langle x_1+x_2+0 \sim 0 \rangle$ since $x_1+x_2+0 \sim 0$ is not in the form of $x^n +cx^m \sim cx^m$.  Note that if we do so, then we get $J = \langle x_1+x_2+y+0, y+0  \rangle$ and $V(J) = \{ (a_1, a_2) \colon a_1, a_2 \geq 0 \} \cup \{ (a_1, a_2) \colon a_1=a_2 \leq 0 \}$.
\end{ex}

\begin{figure}[H]
\captionsetup{font=footnotesize}
\begin{center}
\begin{subfigure}[t]{.44\textwidth}
\centering
\begin{tikzpicture}
\fill[gray!80] (0, 0) rectangle (1.8, 1.8);
\draw [thick, ->] (0,0) -- (2,0) node [below] {$x$};
\draw [thick, ->] (0,0) -- (0,2) node [left] {$y$};
\fill[white] (-0.1, -0.1) rectangle (1.8, -1.2);
\end{tikzpicture}
\caption{$V(\langle x_1+y+0, x_2+y+0, y+0 \rangle)$.}
\label{ex15fig:a}
\end{subfigure}  
\hfill
\begin{subfigure}[t]{.44\textwidth}
\centering
\begin{tikzpicture}
\fill[gray!80] (0, 0) rectangle (1.8, 1.8);
\draw [thick, ->] (0,0) -- (2,0) node [below] {$x$};
\draw [thick, ->] (0,0) -- (0,2) node [left] {$y$};
\draw [thick] (0,0) -- (-1.2,-1.2);
\end{tikzpicture}
\caption{$V(J)$ where $J = \langle x_1+x_2+y+0, y+0  \rangle$.}
\label{ex15fig:b}
\end{subfigure}
\label{ex15fig}
\end{center}
\end{figure}

We devote the rest of this section to show that for given congruence $E$ in $\f{T}[x]$, instead of allowing an extra variable $y$ and constructing ideal $I$ in $\f{T}[x, y]$, we can work over the semiring of tropical dual numbers $\ta{\f{T}}$ and construct an ideal $\ns{I}$ in $\ta{\f{T}}[x]$ such that $V(E) = V(\ns{I})$.  Our construction in this case is also far from providing a functor from the category of congruences in $\f{T}[x]$ to the category of ideals over $\ta{\f{T}}$.

\begin{prop} \label{Enhanced}
Let $E$ be a congruence of $\f{T}[x]$.  Then there is an ideal $\ns{I}$ in $\ta{\f{T}}[x]$ such that $V(E) = V(\ns{I})$. 
\end{prop}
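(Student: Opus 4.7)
The plan is to imitate the sequence of lemmas \Cref{lem1,lem2,poly,polypro}, but working entirely inside $\ta{\f{T}}[x]$ rather than adjoining an auxiliary variable $y$. The driving observation is \Cref{rmrken}: replacing a monomial $cx^m$ in $f \in \f{T}[x]$ by $cx^m + \epsilon cx^m$ enlarges the bend locus by the interior of the polyhedral region where $cx^m$ dominates, so the new bend locus captures the entire closed half-space on which $cx^m$ attains the minimum.

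First I would prove the half-space analogue of \Cref{lem1}: given
\begin{align*}
H = \left\{ a \in \f{T}^k : c + \sum_i m_i a_i \leq c' + \sum_i n_i a_i \right\},
\end{align*}
I set $\ns{f} = c'x^n + cx^m + \epsilon cx^m \in \ta{\f{T}}[x]$. Its three simple monomial terms evaluated at $a$ with $\epsilon = 1_{\f{T}}$ take the values $c' + \sum n_i a_i$, $c + \sum m_i a_i$, and $c + \sum m_i a_i$, and at least two of these coincide with the minimum precisely when $a \in H$. \Cref{cor1} then yields $V(\langle \ns{f} \rangle) = H$.

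Next, closed convex subsets and their finite unions are assembled by formal bookkeeping via \Cref{newprop}. For a closed convex $P = \bigcap_\alpha H_\alpha$ (possibly an infinite intersection of half-spaces, to accommodate non-finitely-generated ideals as in \Cref{infgen}), summing the single-generator ideals $\ns{I}_\alpha$ from the previous step and applying \Cref{newprop}\ref{newintersection} gives an ideal $\ns{I}$ with $V(\ns{I}) = P$. For a finite union $X = \bigcup_{k=1}^r P_k$ of closed convex subsets, the product $\ns{I}_1 \cdots \ns{I}_r$ has bend locus $X$ by \Cref{newprop}\ref{newunion}.

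For the final step I would write $E = \langle f_j \sim g_j \rangle_{j \in \Lambda}$. Each single-relation locus $V(E_j) = \{ a : f_j(a) = g_j(a) \}$ is a finite union of closed convex polyhedra (the classical fact about tropical polynomial functions already invoked in the proof of \Cref{polypro}), so the previous two steps produce an ideal $\ns{I}_j \subseteq \ta{\f{T}}[x]$ with $V(\ns{I}_j) = V(E_j)$. A final application of \Cref{newprop}\ref{newintersection} to $\ns{I} := \sum_{j \in \Lambda} \ns{I}_j$ then yields $V(\ns{I}) = \bigcap_j V(E_j) = V(E)$. I expect the only real obstacle to be the half-space step: one must choose exactly the right $\epsilon$-duplicated polynomial so that its bend locus is the entire closed half-space rather than just its boundary or just its interior, and the rest is essentially identical bookkeeping to the $y$-variable proof of \Cref{polypro}.
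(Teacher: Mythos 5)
Your proposal is correct and follows essentially the same route as the paper: reduce to a single closed half-space using the union/intersection machinery of \Cref{newprop}, and realize that half-space as the bend locus of the $\epsilon$-duplicated binomial $c'x^n + cx^m + \epsilon cx^m$, which is exactly the paper's generator $x^n + (c+c\epsilon)x^m$ after normalizing $c'$ to $1_{\f{T}}$. The only difference is organizational (you build an ideal for each relation $f_j \sim g_j$ and take the possibly infinite sum at the end, rather than first asserting that $V(E)$ itself is a finite union of convex sets), but the key half-space computation and the bookkeeping are identical.
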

\begin{proof}
Since $V(E)$ is finite union of convex subsets, by \Cref{newprop}\ref{newunion}, one can assume $V(E)$ is just a convex subset of $\f{T}^k$.  Because a convex subset is intersection of half-spaces, by applying \Cref{newprop}\ref{newintersection}, it is sufficient to prove the proposition for the case that $V(E)$ is a half-space. \vspace{0.1cm}

Consider the congruence $E' = \langle x^n +cx^m \sim cx^m \rangle$ such that $V(E)=V(E')$, in other words, assume $V(E)$ is the half-space $\{ a \in \f{T}^k : c+\sum m_ia_i \le \sum n_ia_i \}$.  Now for $\ns{I} = \langle x^n +(c + c\epsilon)x^m \rangle$ one can promptly verify that the tropical variety $V(\ns{I})$ is the same half-space $\{ a \in \f{T}^k : c+\sum m_ia_i \le \sum n_ia_i \}$.
\end{proof}

\begin{cor} \label{Enhancedclassical}
Let $\ns{I}$ be an ideal of $\ta{\f{T}}[x]$.  Then there is an ideal $I$ in $\f{T}[x, y]$ such that $i(V(\ns{I})) = V(I)$.
\end{cor}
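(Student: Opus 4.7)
The plan is to reduce the problem to the single-polynomial case and then exploit the fact that the tropical variety of $\ns{f} \in \ta{\f{T}}[x]$ is literally a slice of a classical tropical hypersurface in one dimension higher. Fix generators $\{\ns{f}_\alpha\}_{\alpha \in \Lambda}$ of $\ns{I}$, write each $\ns{f}_\alpha$ in its unique representative in $\f{T}[x_1,\dots,x_k,\epsilon]$ of $\epsilon$-degree at most one, and substitute $y$ for $\epsilon$ to obtain $f_\alpha \in \f{T}[x_1,\dots,x_k,y]$. The proposed ideal is
\begin{equation*}
I \;=\; \bigl\langle\, f_\alpha : \alpha \in \Lambda \,\bigr\rangle + \bigl\langle\, y + 0 \,\bigr\rangle \;\subseteq\; \f{T}[x_1,\dots,x_k,y].
\end{equation*}

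The core observation driving the proof is an evaluation identity: a simple monomial $c\,x^n\epsilon^{\delta}$ of $\ns{f}_\alpha$ evaluated at $(a,\epsilon = 1_{\f{T}})$ agrees with the monomial $c\,x^n y^{\delta}$ of $f_\alpha$ evaluated at $(a,y = 0)$, because $1_{\f{T}} = 0$ is the multiplicative identity. Hence $(a,0)$ lies in the classical bend locus $V(f_\alpha) \subseteq \f{T}^{k+1}$ precisely when at least two simple monomials of $\ns{f}_\alpha$ attain the minimum at $a$, that is, when $a \in V(\ns{f}_\alpha)$ in the sense of \Cref{nhctropvar}. Since $V(y+0) = \{y = 0\} = i(\f{T}^k)$, \Cref{knownprop}\ref{knownintersection} then gives
\begin{equation*}
V\bigl(\langle f_\alpha,\, y+0 \rangle\bigr) \;=\; V(f_\alpha) \cap V(y+0) \;=\; i\bigl(V(\ns{f}_\alpha)\bigr).
\end{equation*}

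To assemble the full result, \Cref{newprop}\ref{newintersection} combined with \Cref{cor1} yields $V(\ns{I}) = \bigcap_\alpha V(\ns{f}_\alpha)$, and since $i$ is injective it commutes with arbitrary intersections, so
\begin{equation*}
i\bigl(V(\ns{I})\bigr) \;=\; \bigcap_\alpha i\bigl(V(\ns{f}_\alpha)\bigr) \;=\; \bigcap_\alpha V\bigl(\langle f_\alpha, y+0\rangle\bigr) \;=\; V(I),
\end{equation*}
where the last equality is another application of \Cref{knownprop}\ref{knownintersection}.

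The one point requiring care is the simple-monomial evaluation identity above, which is what bridges the bespoke definition of $V(\ns{f}_\alpha) \subseteq \f{T}^k$ via \Cref{nhctropvar} (only simple monomials of $\epsilon$-degree $\leq 1$ are counted, and they are evaluated at $\epsilon = 1_{\f{T}}$) with the classical bend locus of $f_\alpha \in \f{T}[x,y]$ at $y = 0$; once this identification is secured, the remainder is a formal manipulation of generators and intersections already codified in \Cref{newprop,knownprop,cor1}. An alternative, should the direct route prove awkward to write up cleanly, is to invoke \Cref{rmrken} to see that each $V(\ns{f}_\alpha)$ is a finite union of closed polyhedra in $\f{T}^k$, apply \Cref{poly} to obtain ideals $I_\alpha \subseteq \f{T}[x,y]$ with $i(V(\ns{f}_\alpha)) = V(I_\alpha)$, and then take $I = \sum_\alpha I_\alpha$; this avoids the evaluation identity at the cost of routing through the machinery of \Cref{lem1,lem2,poly}.
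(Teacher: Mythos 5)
Your argument is correct, but it takes a different route from the one the paper intends. The paper states \Cref{Enhancedclassical} without proof, as a companion to \Cref{polypro,Enhanced}: the implicit derivation is the polyhedral one you relegate to your final sentence, namely that each $V(\ns{f}_\alpha)$ is a finite union of closed polyhedra by \Cref{rmrken}, so \Cref{lem1,lem2,poly} produce ideals $I_\alpha \subseteq \f{T}[x,y]$ with $i(V(\ns{f}_\alpha)) = V(I_\alpha)$, and \Cref{knownprop}\ref{knownintersection} assembles them. Your primary route instead builds $I$ directly from generators: take the unique $\epsilon$-degree $\leq 1$ representative of each $\ns{f}_\alpha$ in $\f{T}[x,\epsilon]$, rename $\epsilon$ to $y$, and adjoin $y+0$. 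The key evaluation identity is sound: since $1_{\f{T}} = 0$, the value of a simple monomial $c x^n \epsilon^{\delta}$ at $a$ with $\epsilon = 1_{\f{T}}$ equals the value of $c x^n y^{\delta}$ at $(a,0)$, and $V(y+0)$ is exactly $i(\f{T}^k)$, so $V(f_\alpha) \cap V(y+0) = i(V(\ns{f}_\alpha))$ matches \Cref{nhctropvar} term by term; the reduction $V(\ns{I}) = \bigcap_\alpha V(\ns{f}_\alpha)$ is exactly the first step in the proof of \Cref{coincide}, and injectivity of $i$ lets you pass the intersection through. What your construction buys is an explicit, generator-level ideal that makes the corollary a clean converse to \Cref{Enhanced} in the spirit of $\ta{\f{T}}[x] = \f{T}[x,\epsilon]/\langle \epsilon^2 \sim 1_{\f{T}}\rangle$, and it bypasses the polyhedral machinery entirely; in particular, for ideals $\ns{I}$ that are not finitely generated you never have to ask whether an infinite intersection of finite unions of polyhedra is again a finite union of convex sets, a point the paper's route (as in \Cref{polypro}) passes over quickly. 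What the paper's route buys is uniformity with the congruence-variety results of \Cref{Tropical Varieties and Congruence Varieties}, reusing \Cref{lem1,lem2,poly} verbatim. Both are valid; your direct argument is arguably the tighter proof of this particular statement.
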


For congruence $E$ in \Cref{Ex15}, the tropical variety associated to $\ns{I} = \langle x_1 +(0 + \epsilon), x_2 +(0 + \epsilon) \rangle$ is the first quadrant. 

\begin{bibdiv}
\begin{biblist}

\bib{MS}{book}{
   author={Maclagan, Diane},
   author={Sturmfels, Bernd},
   title={Introduction to tropical geometry},
   series={Graduate Studies in Mathematics},
   volume={161},
   publisher={American Mathematical Society, Providence, RI},
   date={2015},   
}

\bib{Mikh}{article}{
   author={Mikhalkin, Grigory},
   title={Tropical geometry and its applications},
   conference={
      title={International Congress of Mathematicians. Vol. II},
   },
   book={
      publisher={Eur. Math. Soc., Z\"urich},
   },
   date={2006},
   pages={827--852},
}

\bib{BE}{article}{
author = {Bertram, Aaron},
author = {Easton, Robert},
title = {The tropical Nullstellensatz for congruences},
journal = {To appear in Adv. Math.},
eprint = {http://www.robertweaston.com/wp-content/uploads/documents/papers/Tropical_Null.pdf},
year = {2013},
}

\bib{JM}{article}{
author = {Jo{\'o}, D{\'a}niel},
author = {Mincheva, Kalina},
title = {Prime congruences of idempotent semirings and a Nullstellensatz for tropical polynomials},
eprint = {1408.3817v3},
year = {2015},
}

\bib{IR}{article}{
   author={Izhakian, Zur},
   author={Rowen, Louis},
   title={Supertropical algebra},
   journal={Adv. Math.},
   volume={225},
   date={2010},
   number={4},
   pages={2222--2286},
}

\bib{Izh1}{article}{
   author={Izhakian, Zur},
   title={Tropical arithmetic and matrix algebra},
   journal={Comm. Algebra},
   volume={37},
   date={2009},
   number={4},
   pages={1445--1468},
}

\bib{Izh2}{article}{
   author={Izhakian, Zur},
   title={Tropical algebraic sets, ideals and an algebraic Nullstellensatz},
   journal={Internat. J. Algebra Comput.},
   volume={18},
   date={2008},
   number={6},
   pages={1067--1098},
}

\bib{GG2}{article}{
author = {Giansiracusa, Jeffrey},
author = {Giansiracusa, Noah},
title = {The universal tropicalization and the Berkovich analytification},
eprint = {1410.4348v2},
year = {2016},
}

\bib{SP}{article}{
   author={Payne, Sam},
   title={Analytification is the limit of all tropicalizations},
   journal={Math. Res. Lett.},
   volume={16},
   date={2009},
   number={3},
   pages={543--556},
}

\end{biblist}
\end{bibdiv}

\end{document}